\newtheorem{thm}{Theorem}[section]
\newtheorem{defi}[thm]{Definition}
\newtheorem{lem}[thm]{Lemma}
\newtheorem{cor}[thm]{Corollary}
\newtheorem{prop}[thm]{Proposition}
\newtheorem{rmk}[thm]{Remark}
\begin{document}

\title[Heat equation with degenerate coefficients]{Existence and non-existence of global solutions for a heat equation with degenerate coefficients}

\author{ Ricardo Castillo}
\address{Departamente de Matem\'atica, Facultad de Ciencias, Universidad del B\'io-B\'io - Chile}
\email{rcastillo@ubiobio.com}

\author{Omar Guzm\'an-Rea}
\address{Departamente de Matem\'atica, Universidade de Bras\'ilia - Brazil}
\email{omar.grea@gmail.com}

\author{Mar\'ia Zegarra}
\address{Departamente de Matem\'atica, Universidad Nacional Mayor de San Marcos - Per\'u}
\email{mzegarrag@unmsms.edu.pe}

%\address{  Departamento de Matem\'atica, Universidade Federal de S\~{a}o Carlos - UFSCAR, 50740-540, S\~{a}o Carlos, SP,                                                                                Brazil}
%\email{rijkard1@gmail.com}

\begin{abstract}
In this paper, we will study the following parabolic problem $u_t - div(\omega(x) \nabla u)= h(t) f(u) + l(t) g(u)$ with non-negative initial conditions pertaining to $C_b(\mathbb{R}^N)$, where the weight $\omega$ is an appropriate function that belongs to the Munckenhoupt class $A_{1 + \frac{2}{N}}$ and the functions $f$, $g$, $h$ and $l$ are non-negative and continuous. The main goal is to establish of global and non-global existence of non-negative solutions. In addition, to present the particular case when $h(t) \sim t^r ~~ (r>-1)$, $l(t) \sim t^s ~~ (s>-1)$, $f(u) = u^p$ and $g(u)= (1+u)[\ln(1+u)]^p,$ we obtain both the so-called Fujita's exponent and the second critical exponent in the sense of Lee and Ni \cite{Lee-Ni}. Our results extend those obtained by Fujishima et al. \cite{Fujish} who worked when $h(t)=1$, $l(t)=0$ and $f(u)=u^p $.

\bigskip
\noindent \emph{Keywords:} Heat equation, Non-global solution, Global solution, Degenerate coefficients, Fujita exponent.

\noindent \emph{MSC(2010):} 35K05, 35A01, 35K58, 35K65, 35B33.

\end{abstract}
\maketitle

\section{Introduction}
Let $T>0$ and $N \geq 1$. We consider the following heat equation
\begin{equation}\label{eq2.1}
 \left\{\begin{array}{rll}
 u_{t} - \mbox{div}  (\omega(x) \nabla u )  = & h(t) \, f(u) + l(t) \, g(u)& (x,t) \in  \mathbb{R}^N \times (0,T),\\
 u(0)= & u_{0}\geq 0 & x \in  \mathbb{R}^N ,
      \end{array}\right.
\end{equation}
where $u_{0}\in C_b(\mathbb{R}^N)$, $h,l: [0,\infty) \rightarrow [0,\infty)$ are continuous functions, the functions $f,g: [0, \infty) \rightarrow [0,\infty)$ are locally Lipschitz continuous and the spatial function $\omega : \mathbb{R}^N \rightarrow [0,\infty)$ is a weight, which can be
\begin{itemize}
 \item[(A)] $\omega(x) = \ \mid x_1 \mid^{a}$ with $a \in [0, 1)$ if $N = 1,2$; and $a \in [0, 2/N)$ if $N \geq 3,$ \\
or
  \item[(B)] $\omega(x) = \ \mid x \mid^{b}$ with $b \in [0, 1)$.
\end{itemize}
Here $x_1$ is the first coordinate of $x = (x_1,...,x_N) \in \mathbb{R}^N$ and $ \mid . \mid$ is the Euclidean norm of $\mathbb{R}^N$. Note that the operator $ \mbox{div}(\omega(x) \nabla(u)) $ under the conditions $(A)$ or $(B) $ on $ \omega(x)$ is not self-adjoint. Fujishima et al. very well commented on this aspect (see \cite[p.6]{Fujish}). Something important to note is that the degenerate operator $\mbox{div}  (\omega(x) \nabla u )$ when $\omega(x) = \mid x_1 \mid^{a},$ is related to the fractional Laplacian, through the Caffarelli-Silvestre extension (see \cite{Caffarelli}, \cite{Sande} and \cite[p.3]{Fujish}). The fractional Laplacian has found extensive applications in several physical and biological models with anomalous diffusion; for example, see \cite{Klafter} and the references therein. 

%Another reason that caught our attention to undertake the current study was the relationship that exists between the fractional Laplacian and the degenerate operator $\mbox{div}  (\omega(x) \nabla u )$ when $\omega(x) = \mid x_1 \mid^{a},$ through the Caffarelli-Silvestre extension (see \cite{Caffarelli}, \cite{Sande} and \cite[p.3]{Fujish}). The fractional Laplacian has found extensive applications in several physical and biological models with anomalous diffusion; for example, see \cite{Klafter} and the references therein.

%The primary motivation of this paper is to study conditions for the global and non-global existence of solutions for nonhomogeneous parabolic equations with degenerate coefficients. 
Equation \eqref{eq2.1} appears in models that describe the processes of heat propagation in inhomogeneous media, see e.g. \cite{Kamin3}, \cite{Kamin4}, \cite{Kamin5}, \cite{Zeldovich} where the authors studied thermal phenomena related to the following equation
\begin{equation}\label{A}
v_t - \mid x \mid^{\beta_1} \mbox{div}( \mid x\mid^{\beta_2} \nabla v^m )=0.
\end{equation}
Recently in \cite{Bonforte}, M. Bonforte and N. Simonov obtained a priori Harnack inequalities and H\"{o}lder continuity for a class of nonnegative local weak solution of (\ref{A}), where the weights $\mid x \mid^{\beta_1} $ and $\mid x \mid^{\beta_2}$ does not necessarily belong to the Muckenhoupt class $A_{2}$ and $m \geq 1$.

Several authors extensively studied the problem \eqref{eq2.1} when $\omega(x) = 1 $. Let us mention, for instance, the seminal work of Fujita \cite{FU}, who studied problem \eqref{eq2.1}, and showed the existence of the following value $p^\star(0):=1 + 2/N$ known as critical Fujita exponent, which determines both the global and the non-global existence of nonnegative solutions. Specifically, He proved the following result: Assume that $\omega(x)=1 $, $l(t) =0$, $h(t)=1 $, and $f(u)=u^p $ with $p>1. $
\begin{itemize}
  \item If $1 < p < p^\star(0)$, then problem (\ref{eq2.1}) does not admit any nontrivial nonnegative global solution.
  \item If $p > p^\star(0)$, then problem (\ref{eq2.1}) has global solutions, depending on the size of the initial condition.
\end{itemize}

See also \cite{Hayakawa}, \cite{AW}, \cite{QS} for more details in these settings. Another significant result in this direction is that obtained by Lee and Ni in \cite{Lee-Ni}, who determined the so-called second critical exponent $\rho^\star(0)= \frac{2}{p-1} $ when $p^\star(0) < p$ under suitable decay conditions on the initial data. More precisely: Suppose the same previous conditions on (\ref{eq2.1}) with $  p^\star(0)< p $.
\begin{itemize}
  \item If $0 < \rho \leq \rho^\star(0)$, then problem (\ref{eq2.1}) has no global solutions for any $\psi \in I_\rho $.
  \item If $\rho^\star(0) < \rho <N$, then for any $u_0 \in I^{\rho} $ there exists $\Lambda(0) >0 $ such that the problem (\ref{eq2.1}) with initial data $\lambda u_0 $ has a global solution for all $0 < \lambda < \Lambda(0) $.
\end{itemize}
Here
\begin{equation*}
\begin{array}{ll}
I^{\rho} &=   \{ \psi \in C_b (\mathbb{R}^{N}), \psi \geq 0  \mbox{ and }  \limsup_{\mid x \mid \rightarrow \infty}  \mid x \mid^{\rho} \psi(x) < \infty \}, \mbox{ and }\\ \noalign{\medskip}
I_{\rho} &=   \{ \psi \in C_b (\mathbb{R}^{N}), \psi \geq 0  \mbox{ and }  \liminf_{\mid x \mid \rightarrow \infty}  \mid x \mid^{\rho} \psi(x) > 0  \}.
\end{array}
\end{equation*}
for $ \rho >0.$ For other works in this line of research, the readers can refer to \cite{Yang1}, \cite{JYang1}, \cite{JYang2}.

%Note that the operator $ \mbox{div}(\omega(x) \nabla(u)) $ under the conditions $(A)$ or $(B) $ on $ \omega(x)$ is not self-adjoint. Fujishima et al. very well commented on this aspect (see \cite[p.6]{Fujish}). In this same article, the authors

Fujishima et al. \cite{Fujish}, established the first results with optimal conditions for the global existence for a problem related to (\ref{eq2.1}), where $\omega$ either is $(A)$ or is $(B)$, and showed the following Fujita-type result.
\begin{thm}[\cite{Fujish}] \label{THMFU} Assume either (A) or (B). Let $\alpha = a$ for the case (A) or $\alpha = b$ for the case (B). Assume  $h(t) = 1 $, $l(t)=0$, $f(t) =t^p $, and $p^\star(\alpha) = 1 + \frac{2 - \alpha}{N} $.
\begin{itemize}
  \item[(i)] If $1 < p \leq p^\star(\alpha) $, then problem $\eqref{eq2.1} $ has no nontrivial global-in-time solutions.
  \item[(ii)] If $p^\star(\alpha) < p $, then problem $\eqref{eq2.1} $ has nontrivial global-in-time solutions.
\end{itemize}
\end{thm}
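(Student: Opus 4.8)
We sketch how one would prove the theorem above. The plan is to handle cases (A) and (B) simultaneously; throughout write $L_\omega u:=\mathrm{div}(\omega(x)\nabla u)$ and let $\{S(t)\}_{t\ge 0}$ be the semigroup generated by $L_\omega$ on $\mathbb{R}^N$. Since $\int_{\mathbb{R}^N}L_\omega u\,dx=0$ for sufficiently decaying $u$, this semigroup preserves the Lebesgue mass, is order preserving, and is a contraction on $L^1(\mathbb{R}^N)$ and on $L^\infty(\mathbb{R}^N)$. The structural fact I would build on is that $L_\omega$ is invariant under the anisotropic parabolic dilation $u(x,t)\mapsto u(\lambda x,\lambda^{2-\alpha}t)$; together with uniqueness this forces the fundamental solution $G(x,y,t)$, normalized by $\int_{\mathbb{R}^N}G(x,y,t)\,dx=1$, to be self-similar at the origin:
\begin{equation*}
G(x,0,t)=t^{-\frac{N}{2-\alpha}}\,\Phi\!\left(t^{-\frac{1}{2-\alpha}}x\right),\qquad \Phi\ge 0\ \text{bounded and continuous, with}\ \Phi(0)>0.
\end{equation*}
Beyond this I would use the two-sided Aronson-type bounds for $G$ available because $\omega$ is a Muckenhoupt weight (for case (A) also via the Caffarelli--Silvestre picture, cf.\ \cite{Caffarelli,Fujish}), in the form of the smoothing estimates $\|S(t)\phi\|_{L^r}\le C t^{-\frac{N}{2-\alpha}(\frac1q-\frac1r)}\|\phi\|_{L^q}$ for $1\le q\le r\le\infty$, and of the matching lower bound, which yields $u(x,t)\ge c\,t^{-\frac{N}{2-\alpha}}$ on $\{\,|x|\le c\,t^{\frac1{2-\alpha}}\,\}$ for $t\ge 1$ as soon as $u$ is bounded below on a fixed ball at time $1$. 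Note that $p^\star(\alpha)=1+\frac{2-\alpha}{N}=1+(\tfrac{N}{2-\alpha})^{-1}$ is exactly the threshold at which $\int^\infty\tau^{-\frac{N}{2-\alpha}(p-1)}\,d\tau$ diverges.

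For part~(ii), with $p>p^\star(\alpha)$, I would take $u_0\not\equiv 0$ nonnegative and compactly supported with $\|u_0\|_{L^1\cap L^\infty}$ small and run a contraction-mapping argument for the Duhamel map $\Gamma[u](t):=S(t)u_0+\int_0^t S(t-s)\,u(s)^p\,ds$ on $Y_M:=\{\,u\in C([0,\infty);C_b(\mathbb{R}^N)):0\le u,\ \sup_{t\ge 0}(1+t)^{\frac{N}{2-\alpha}}\|u(t)\|_{L^\infty}\le M\,\}$. One bounds $\|S(t-s)u(s)^p\|_{L^\infty}$ by $\min\{\,\|u(s)\|_{L^\infty}^{p},\,(t-s)^{-\frac{N}{2-\alpha}}\|u(s)\|_{L^\infty}^{p-1}\|u(s)\|_{L^1}\,\}$, splits the Duhamel integral at $s=t/2$, and controls $\|u(s)\|_{L^1}$ by a Gronwall estimate — here the convergence of $\int_0^\infty(1+s)^{-\frac{N}{2-\alpha}(p-1)}\,ds$ is precisely what $p>p^\star(\alpha)$ provides. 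For $M$ and $\|u_0\|_{L^1\cap L^\infty}$ small this shows that $\Gamma$ maps $Y_M$ into itself and is a contraction, so its fixed point is a nontrivial global mild solution, upgraded to a classical one by parabolic regularity.

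For part~(i), with $1<p\le p^\star(\alpha)$, I would argue by contradiction. Let $u\ge 0$ be a nontrivial global solution, so that $u_0\ge c_0>0$ on some ball. Fixing $s>0$ and $t_0\ge 0$ to be chosen, set $J(t):=\int_{\mathbb{R}^N}u(x,t+t_0)\,G(x,0,s-t)\,dx$ for $t\in[0,s)$. Because $G(x,0,s-t)\,dx$ is a probability measure, $\partial_t G(x,0,s-t)=-L_\omega G(x,0,s-t)$, and $L_\omega$ is symmetric on $L^2(dx)$, an integration by parts on each diffusion term cancels them, and Jensen's inequality gives $J'(t)=\int_{\mathbb{R}^N}u(x,t+t_0)^p\,G(x,0,s-t)\,dx\ge J(t)^p$; hence $J$ becomes infinite before $t=((p-1)J(0)^{p-1})^{-1}$, which is impossible (since $J$ is finite on $[0,s)$) whenever $J(0)>((p-1)s)^{-1/(p-1)}$. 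By the self-similar form of $G$, for $s$ large one has $J(0)\ge\frac{\Phi(0)}{2}\,s^{-\frac{N}{2-\alpha}}\int_{|x|\le\delta s^{1/(2-\alpha)}}u(x,t_0)\,dx$. If $1<p<p^\star(\alpha)$ then $\frac1{p-1}>\frac{N}{2-\alpha}$, so taking $t_0=0$ and letting $s\to\infty$ already forces $J(0)$ above the threshold. If $p=p^\star(\alpha)$ the two powers of $s$ coincide, and I would first show that the mass $m(t):=\int_{\mathbb{R}^N}u(x,t)\,dx$ diverges — using the linear lower bound, $m'(t)=\int_{\mathbb{R}^N}u^p\,dx\ge c\,t^{\frac{N(1-p)}{2-\alpha}}=c\,t^{-1}$ at criticality, so $m(t)\ge m(1)+c\log t\to\infty$ — and then pick $t_0$ so large that $\frac{\Phi(0)}{2}m(t_0)$ beats $(p-1)^{-1/(p-1)}$, followed by $s$ correspondingly large. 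In either case $J(0)$ is too large, a contradiction, so $u\equiv 0$. (Alternatively, the subcritical case also follows from the rescaled test-function method, with test functions adapted to the dilation $t\sim|x|^{2-\alpha}$.)

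The genuinely delicate point is the critical exponent $p=p^\star(\alpha)$: there the differential inequality $J'\ge J^p$ is exactly balanced against the length of the interval on which it is available, so the argument closes only after the extra fact $m(t)\to\infty$ has been extracted from the linear lower bound. Underlying everything is the need for the sharp two-sided fundamental-solution estimates carrying the anomalous exponent $\tfrac{N}{2-\alpha}$ for the non-self-adjoint degenerate operator — this is the technical heart of \cite{Fujish} — together with the routine justification of the integrations by parts on $\mathbb{R}^N$ via a spatial cut-off in the integral identity.
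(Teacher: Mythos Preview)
The paper does not itself prove Theorem~\ref{THMFU}; it is quoted from \cite{Fujish}, whose original argument (as the paper recalls just after the statement) goes through an iteration lemma, \cite[Lemma~3.1]{Fujish}. What the present paper offers instead is an alternative route via its abstract Theorem~\ref{thm1}: in Remark~\ref{RMK1}(ii) it is noted that the case $h\equiv 1$, $l\equiv 0$, $f(u)=u^p$ of Corollary~\ref{cor1} recovers Theorem~\ref{THMFU}. Your sketch follows neither of these. For global existence you run a contraction in a time-weighted $L^\infty$ space, whereas the paper uses Weissler's monotone iteration, bounding $u^k(t)\le(1+\beta)S(t)u_0$; both are standard and equivalent in strength. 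For non-existence you use a Kaplan-type weighted average $J(t)=\int u\,G(\cdot,0,s-t)\,dx$ and the differential inequality $J'\ge J^p$; the paper instead verifies Meier's scalar condition $\int_{\|S(\tau)u_0\|_\infty}^\infty\sigma^{-p}\,d\sigma\le\tau$ (Theorem~\ref{thm1}(ii)) using only the decay $\|S(\tau)u_0\|_\infty\ge c\,\tau^{-N/(2-\alpha)}$ from Lemma~\ref{Fujish2}. The paper's route has two practical advantages: it works directly at the level of the mild formulation~\eqref{MILD} (your $J'$ computation presupposes a classical solution and the integration by parts across the degeneracy set of $\omega$), and it is stated once for general convex $f$ and time-dependent $h$, which is the point of Theorem~\ref{thm1}.

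There is a genuine gap in your treatment of the critical case $p=p^\star(\alpha)$. You differentiate the total mass $m(t)=\int_{\mathbb{R}^N}u(x,t)\,dx$ and argue $m'(t)\ge c\,t^{-1}$, hence $m(t)\to\infty$, and then feed $m(t_0)$ back into the lower bound for $J(0)$. But for $u_0\in C_b(\mathbb{R}^N)$ a bounded global solution need not lie in $L^1(\mathbb{R}^N)$, so $m(t)$ may be identically $+\infty$ and $m'$ undefined; even if $u_0$ is compactly supported, the identity $m'(t)=\int u^p$ requires justifying that the flux $\int\omega\,\partial_\nu u$ over large spheres vanishes, which is not automatic for the degenerate operator. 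The paper's proof of the critical case (see the proof of Corollary~\ref{cor1}(i)) avoids this entirely by working with the \emph{localized} quantity $\int_{|x|\le t^{1/(2-\alpha)}}u(x,t)\,dx$: using the Duhamel formula, the kernel lower bound in $(K_3)$, and the estimate~\eqref{Crit3}, it shows directly that this localized integral grows at least like $\log t$, then picks $T_m$ with localized mass $\ge m$ and re-applies the blow-up test (Theorem~\ref{thm1}(ii)) to the shifted solution $v(t)=u(t+T_m)$. Your argument can be repaired along the same lines---replace $m(t)$ by this localized integral, or first reduce by comparison (Lemma~\ref{Comparison}) to a compactly supported $u_0$ and control the $L^1$ norm via Duhamel rather than by differentiating the PDE.
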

For the non-global existence results in Theorem \ref{THMFU}, the authors powerfully used Lemma $3.1$ in \cite{Fujish}, which was obtained through an iterative method, that has been widely used in several other works, e.g. see \cite{Bai}, \cite{Escobedo}, \cite{Miguel}, \cite{RMC}, \cite{RM}. Unfortunately, this same approach cannot be used for the case of problem \eqref{APPL}, due to logarithmic nonlinearity $(u+1) (\ln(u+1))^q $.

In the current work, we are interested in establishing conditions for the global and nonglobal existence of solutions of \eqref{eq2.1}, adapting the ideas found in \cite{Meier1} and \cite{Nosso}. Meier, in \cite{Meier1}, studied the phenomenon of blow-up for the problem \eqref{eq2.1}, when $\omega(x) = 1$, $l(t)=0$ and $f$ is continuously differentiable with $f' \geq 0$ and satisfies the integral condition in $(\Phi_1)$ (this condition is given later). More specifically, he showed that if $f$ is convex with $f(0)=0$, then there is blow-up in finite time for the solution of the problem \eqref{eq2.1}, provided
$\displaystyle\int_{\| T(\tau) u_0 \|_\infty}^\infty \frac{d \sigma}{f(\sigma)} < \int_0^\tau h(\sigma) d\sigma \, \mbox{ for some } \, \tau > 0, $
where $\{ T(t) \}_{t \geq 0}$ is the Dirichlet heat semigroup in a smooth domain $\Omega$ (see \cite[p.440]{QS}). Later, Loayza and C. Da Paix\~{a}o \cite{Nosso} improved those results and established conditions for the global and non-global existence of nonnegative solutions of problem \eqref{eq2.1}, when $l=0$, and $f$ is locally Lipschitz continuous. The authors also applied their results to obtain Fujita-type results (see \cite[Theorem 1.7]{Nosso}); notice that they do not address the critical case $q = 1 + \frac{2(r+1)}{N}.$

We applied our results (see Theorem \ref{thm1}) to obtain both Fujita-type results and the second critical exponent, in the sense of \cite{Lee-Ni}, for a problem related to the following equation with logarithmic nonlinearity
\begin{equation}\label{APPL}
 \left\{\begin{array}{rll}
 u_{t} - \mbox{div}  (\omega(x) \nabla u )  = &  h(t) \, u^p \, + \, l(t) \, (u+1) (\ln(u+1))^q & (x,t) \in  \mathbb{R}^N \times (0,T),\\
 u(0)= & u_{0}\geq 0 & x \in  \mathbb{R}^N,
      \end{array}\right.
\end{equation}
with suitable $h(t) $ and $ l(t)$ (see Corollary \ref{cor1} and Remark \ref{RMK1}). As far as we are aware, results concerned to second critical exponent for problems related to \eqref{APPL} with $h(t)=0$ has not been addressed until now, even in the case when $\omega(x)=1.$ Since the first work done in 1979 by Galaktionov, Kurdyumov, Mikhailnov, and  Samarskii \cite{Galaktionov}, problems with logarithmic nonlinearities have received considerable attention in various research papers due to their multiple applications in physics and applied sciences. Readers can see the following works and the references \cite{Galaktionov1}, \cite{Galaktionov2}, \cite{Galaktionov3}, \cite{Galaktionov4}, \cite{Cheng}, \cite{Deng}, \cite{Ding}, \cite{RFerreira}, \cite{Lian}, \cite{Li}.

We would like to mention that the techniques used in this paper can be applied to treat the following more general problem
$$u_t - div (\omega(x) \nabla u) = \sum_{i=1}^k h_i (t) f_i(u)\,\,\,(k \in \mathbb{N}),$$
where $h_i \in C[0,\infty) $ and $f_i \in C[0,\infty)$ for $i=1,...,k.$

To prepare an accurate statement of the non-global existence result, we assume that $f$ or $g$ lies in
$$\Phi = \left\{\gamma:[0, \infty) \rightarrow [0, \infty) : \gamma \mbox{ satisfies } (\Phi_1) \mbox{ and } (\Phi_2) \right\} $$
where
\begin{itemize}
\item[($\Phi_1 $)] $ \displaystyle \int_{z}^{\infty} \frac{d \sigma}{\gamma(\sigma)} < \infty $ for all $z > 0$ and $ \gamma(S(t)v_0) \leq S(t)\gamma(v_0)$ for all $0 \leq v_0 \in C_b(\mathbb{R}^N) $ and $t>0$.
\item[($\Phi_2 $)] The function $\gamma $ is nondecreasing such that $\gamma(s) > 0$ for all $ s>0$.
\end{itemize}
Here $S(t)\phi(x):= \displaystyle  \int_{\mathbb{R}^N} \, \Gamma(x,y,t) \, \phi(y) \, dy \, $, where $\Gamma(x,y,t) $ is the fundamental solution of
\begin{equation} \label{FUND}
 v_{t}- \mbox{div}(\omega(x) \nabla v )  = 0,~~~~ x \in \mathbb{R}^N, ~~~~ t >0
\end{equation}
with a pole in $(y,0).$ Also, $\Gamma(x,y,t)$ is nonnegative, and continuous for all $(x, t) \neq (y,0),$ see for more details \cite{Gutierrez},  \cite{GutierrezArg}.

The integral condition in $(\Phi_1)$ is related to the Osgood-type condition (see, e.g. \cite{Laister1}), while the second condition in $(\Phi_1)$ is satisfied if, for example, $\gamma $ is convex (see Lemma \ref{13.1}).

Our main result is the following
\begin{thm}\label{thm1} Assume either (A) or (B), and supposse that $f,g : [0,\infty) \rightarrow [0,\infty)$ are locally lipchitz continuous functions such that $f(0)=g(0)=0. $
\begin{itemize}
 \item[(i)] If $f $, $ g$, $f(s)/s$ and $g(s)/s$ are nondecreasing in some interval $(0,m] $, and there exist nontrivial $ 0 \leq v_0 \in C_b(\mathbb{R}^N) $, $ \|v_0\|_\infty \leq m $ satisfies
 \begin{equation}
 \int_{0}^{\infty} h(\sigma) \, \frac{f \left(\|S(\sigma)v_0\|_{\infty} \right)}{\|S(\sigma)v_0\|_{\infty}} \, d\sigma \, + \, \int_{0}^{\infty} l(\sigma) \, \frac{g \left(\|S(\sigma)v_0\|_{\infty} \right)}{\|S(\sigma)v_0\|_{\infty}} \, d\sigma  < 1,
 \end{equation}
 then there exists $ \delta > 0 $, such that for $ \delta v_0 = u_0,$ the solution of \eqref{eq2.1} is a global solution.
 \item[(ii)] Let $0 \leq u_0 \in C_b(\mathbb{R}^N)$, $u_0 \neq 0 $ and suppose that one of the following conditions hold:
\begin{itemize}
  \item[(a)]  $f \in \Phi $ and there exist $ \tau > 0$ such that
  \begin{equation} \label{TTa}
  \int_{ \| S(\tau)u_0 \|_\infty }^{\infty} \frac{d \sigma}{f(\sigma)} \leq \int_{0}^{\tau} h(\sigma) d \sigma
  \end{equation}
  \item[(b)] $g \in \Phi $ and there exist $ \tau > 0$ such that
  \begin{equation} \label{TTb}
  \int_{ \| S(\tau)u_0 \|_\infty }^{\infty} \frac{d \sigma}{g(\sigma)} \leq \int_{0}^{\tau} l(\sigma) d \sigma
  \end{equation}
\end{itemize}
Then the solution of problem \eqref{eq2.1} with initial condition $u_0$ is non-global.
\end{itemize}
\end{thm}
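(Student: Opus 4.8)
The plan is to produce a global supersolution by monotone iteration. Denote by $\theta<1$ the left-hand side of the displayed inequality in (i), fix $K:=\frac{1}{1-\theta}\ge 1$ (so that $1+K\theta=K$), and choose $\delta>0$ so small that $K\delta\le 1$; this forces every value occurring below to stay in $(0,m]$, since $K\delta\,[S(s)v_0](x)\le K\delta\|v_0\|_\infty\le m$. Starting from $u^{(0)}(t):=\delta S(t)v_0$, put
\[
u^{(n+1)}(t):=\delta S(t)v_0+\int_0^t S(t-s)\big(h(s)f(u^{(n)}(s))+l(s)g(u^{(n)}(s))\big)\,ds .
\]
First I would verify by induction, using that $f,g$ are nondecreasing on $(0,m]$, that $0\le u^{(n)}\le u^{(n+1)}$. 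The crucial step is the uniform bound $u^{(n)}(t)\le K\delta\,S(t)v_0$: granting it for $n$, monotonicity of $\sigma\mapsto f(\sigma)/\sigma$ on $(0,m]$ together with $K\delta\le 1$ gives, pointwise,
\[
f\big(u^{(n)}(s)(x)\big)\le\frac{f\big(\|S(s)v_0\|_\infty\big)}{\|S(s)v_0\|_\infty}\,K\delta\,[S(s)v_0](x),
\]
and likewise for $g$; applying the positivity-preserving operator $S(t-s)$ and using $S(t-s)S(s)v_0=S(t)v_0$ yields $u^{(n+1)}(t)\le\delta S(t)v_0(1+K\theta)=K\delta\,S(t)v_0$. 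The monotone limit $u:=\lim_n u^{(n)}$ is then a global mild solution with $0\le u(t)\le K\delta\,S(t)v_0\in C_b(\mathbb{R}^N)$, and by uniqueness it is \emph{the} solution of \eqref{eq2.1} with $u_0=\delta v_0$; hence that solution is global.

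\textbf{Part (ii).} I will treat case (a); case (b) is identical, with $(g,l)$ replacing $(f,h)$ and the term $h(t)f(u)$ discarded in place of $l(t)g(u)$. Suppose, for a contradiction, that \eqref{eq2.1} has a global solution $u$, necessarily with $u\ge0$ by the comparison principle. Put $M:=\|S(\tau)u_0\|_\infty$; since $u_0\ge0$, $u_0\not\equiv0$ and $\Gamma>0$, one has $0<M<\infty$. The device is a backward test function: fix $\epsilon\in(0,M/2)$, pick $y_0\in\mathbb{R}^N$ with $[S(\tau)u_0](y_0)>M-\epsilon$, and set $\Phi(x,t):=\Gamma(y_0,x,\tau-t)$ for $0\le t<\tau$, which is a nonnegative solution of $\Phi_t+\mathrm{div}(\omega\nabla\Phi)=0$ with $\int_{\mathbb{R}^N}\Phi(x,t)\,dx=[S(\tau-t)1](y_0)=1$ and $\Phi(\cdot,\tau^-)=\delta_{y_0}$. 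For $J(t):=\int_{\mathbb{R}^N}u(x,t)\Phi(x,t)\,dx$, multiply \eqref{eq2.1} by $\Phi$, integrate by parts in $x$ (using $\int\mathrm{div}(\omega\nabla u)\,\Phi\,dx=\int u\,\mathrm{div}(\omega\nabla\Phi)\,dx$), and drop the nonnegative term $l(t)g(u)\Phi$:
\[
J'(t)\ \ge\ h(t)\int_{\mathbb{R}^N}f(u(x,t))\Phi(x,t)\,dx\ =\ h(t)\,\big[S(\tau-t)f(u(t))\big](y_0)\ \ge\ h(t)\,f\big(\big[S(\tau-t)u(t)\big](y_0)\big)\ =\ h(t)\,f(J(t)),
\]
where the last inequality is exactly the second condition in $(\Phi_1)$ applied to $u(t)\in C_b(\mathbb{R}^N)$, $u(t)\ge0$. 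Since $J(0)=[S(\tau)u_0](y_0)>0$ and $f>0$ on $(0,\infty)$, integration gives $\int_{J(0)}^{J(t)}d\sigma/f(\sigma)\ge\int_0^t h(\sigma)\,d\sigma$ for $0\le t<\tau$.

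Letting $t\to\tau^-$, continuity of $u$ and $\Phi(\cdot,\tau^-)=\delta_{y_0}$ give $J(t)\to u(y_0,\tau)<\infty$ (the solution is global). Using hypothesis \eqref{TTa} in the form $\int_0^\tau h\ge\int_M^\infty d\sigma/f(\sigma)$, together with $M-\epsilon<J(0)\le M$ and $f$ nondecreasing,
\[
\int_{u(y_0,\tau)}^{\infty}\frac{d\sigma}{f(\sigma)}\ \le\ \int_{J(0)}^{\infty}\frac{d\sigma}{f(\sigma)}-\int_0^\tau h(\sigma)\,d\sigma\ \le\ \int_{M-\epsilon}^{M}\frac{d\sigma}{f(\sigma)}\ \le\ \frac{\epsilon}{f(M-\epsilon)} .
\]
Since $u(y_0,\tau)\le\|u(\tau)\|_\infty$, a fortiori $\int_{\|u(\tau)\|_\infty}^{\infty}d\sigma/f(\sigma)\le\epsilon/f(M-\epsilon)$ for every small $\epsilon$; as $\epsilon\to0^+$ (and $f(M-\epsilon)\ge f(M/2)>0$) this forces $\int_{\|u(\tau)\|_\infty}^{\infty}d\sigma/f(\sigma)=0$, i.e.\ $\|u(\tau)\|_\infty=\infty$, contradicting $u(\cdot,\tau)\in C_b(\mathbb{R}^N)$.

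\textbf{Main obstacle.} The delicate points are in Part (ii): making rigorous the identity $\frac{d}{dt}\int u\Phi\,dx=\int(hf(u)+lg(u))\Phi\,dx$ for the (weak) solution $u$ tested against the concentrating kernel $\Phi$ — this uses the Gaussian-type estimates and interior regularity for $\mathrm{div}(\omega\nabla\cdot)$ furnished by the $A_{1+2/N}$ theory (cf.\ the cited work of Guti\'errez), the admissibility of the integration by parts with the vanishing weight $\omega$, and the limit $J(t)\to u(y_0,\tau)$ — and handling that $S(\tau)u_0$ need not attain its supremum $M$, which is why one argues with an $\epsilon$-optimal point $y_0$ and lets $\epsilon\to0$ rather than obtaining a single clean blow-up of $J$. (The mild clash between using the fundamental solution of the formal adjoint problem and the fact, recalled in the Introduction, that $\mathrm{div}(\omega\nabla\cdot)$ is not self-adjoint in the natural weighted space is only bookkeeping; only the integration-by-parts identity above is used.) In Part (i) the one thing to watch is that every iterate stays inside $(0,m]$, which is guaranteed by the smallness of $\delta$.
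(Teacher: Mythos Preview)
Your proof is correct and follows the paper's approach closely. Part~(i) is identical: Weissler's monotone iteration with the uniform bound $u^{(n)}(t)\le C\,S(t)u_0$, your $K=1/(1-\theta)$ playing the role of the paper's $1+\beta$.

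For Part~(ii) you and the paper both run Meier's backward-semigroup ODI, but you take an avoidable detour. You obtain $J'(t)\ge h(t)f(J(t))$ by multiplying the PDE by $\Phi(x,t)=\Gamma(y_0,x,\tau-t)$ and integrating by parts, which is what generates the regularity worries in your final paragraph. The paper sidesteps all of this by applying $S(s-t)$ directly to the \emph{mild} formulation: from $u(t)=S(t)u_0+\int_0^t S(t-\sigma)(hf(u)+lg(u))\,d\sigma$ and the semigroup property $(K_2)$ one gets, pointwise,
\[
S(s-t)u(t)\ \ge\ \Theta(t):=S(s)u_0+\int_0^t h(\sigma)\,f\big(S(s-\sigma)u(\sigma)\big)\,d\sigma,
\]
using $(\Phi_1)$ inside the integral; then $\Theta'(t)=h(t)f(S(s-t)u(t))\ge h(t)f(\Theta(t))$ by $(\Phi_2)$. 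Since your $J(t)$ is exactly $[S(\tau-t)u(t)](y_0)$, this delivers your ODI with no integration by parts against the degenerate weight, no interior regularity, and no need to justify the limit $J(t)\to u(y_0,\tau)$ against a concentrating kernel. Your careful $\epsilon$-argument handling the possible non-attainment of $\|S(\tau)u_0\|_\infty$ is a genuine improvement over the paper, which glosses over this point (indeed the displayed inequality $\int_{\|S(s)u_0\|_\infty}^\infty\ge\int_{S(s)u_0}^\infty$ there is written with the wrong sign).
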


As one application of our Theorem, we yield the following Corollary.
\begin{cor}\label{cor1} Assume either (A) or (B). Let $\alpha = a$ in the case (A) and $\alpha = b$ in the case (B). Suppose $p>1$, $q>1$ and $(h,l) \in (C[0,\infty))^2 $ tal que $h(t) \sim t^r ~~(r>-1)$ ~e~ $l(t) \sim t^s ~~(s>-1)$ for $t$ large enough.
\begin{itemize}
  \item[(i)] If \, $1 < p \leq 1 + \frac{(2- \alpha)(r+1)}{N} $~ or ~ $1 < q \leq 1 + \frac{(2- \alpha)(s+1)}{N} $, then the problem $\eqref{APPL} $ has no nontrivial global solutions.
  \item[(ii)] Let $ 1 + \frac{(2- \alpha)(r+1)}{N}  < p $ ~ \, and \, ~ $1 + \frac{(2- \alpha)(s+1)}{N}  < q .$
  \begin{itemize}
    \item[(a)] If \, $ 0 < \rho \leq \max \left \{ \frac{(2 - \alpha)(r+1)}{p-1}, \frac{(2 - \alpha)(s+1)}{q-1}  \right\}$, then the problem \eqref{APPL} has no global solution for all initial data $ \psi \in I_{\rho}.$
    \item[(b)] If \, $\max \left \{ \frac{(2 - \alpha)(r+1)}{p-1}, \frac{(2 - \alpha)(s+1)}{q-1}  \right\}< \rho$, for every initial data $u_0 \in I^{\rho} $ with $0<\rho < N ,$ there exists $\Lambda(\alpha)>0$ such that the problem \eqref{APPL} with initial data $\lambda u_0 $ has nontrivial global solutions for all $ 0 < \lambda < \Lambda(\alpha) $.
  \end{itemize}
\end{itemize}
\end{cor}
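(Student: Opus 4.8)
The plan is to deduce both parts of the Corollary from Theorem~\ref{thm1} applied to \eqref{APPL}, i.e.\ to \eqref{eq2.1} with $f(s)=s^p$, $g(s)=(s+1)[\ln(s+1)]^q$ and the given $h,l$. First I would check the structural hypotheses. Since $p>1$ one has $\int_z^\infty\sigma^{-p}\,d\sigma=z^{1-p}/(p-1)<\infty$, and since $q>1$ the substitution $w=\ln(\sigma+1)$ gives $\int_z^\infty\frac{d\sigma}{(\sigma+1)[\ln(\sigma+1)]^q}=[\ln(z+1)]^{1-q}/(q-1)<\infty$. Both $f,g$ are $C^1$ on $[0,\infty)$ with $f(0)=g(0)=0$, and $f''>0$ while $g''(s)=\frac{q}{s+1}[\ln(s+1)]^{q-2}\big(\ln(s+1)+q-1\big)>0$ for $s>0$; thus $f,g$ are convex, so by the remark preceding Theorem~\ref{thm1} (Lemma~\ref{13.1}) together with $S(t)\mathbf{1}=\mathbf{1}$ we get $\gamma(S(t)v_0)\le S(t)\gamma(v_0)$, hence $f,g\in\Phi$. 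Finally $f(s)/s=s^{p-1}$ is increasing and $\frac{d}{ds}\bigl(g(s)/s\bigr)=s^{-2}[\ln(s+1)]^{q-1}\bigl(qs-\ln(s+1)\bigr)>0$ because $\ln(1+s)<s\le qs$; with $f,g$ increasing, the monotonicity hypotheses of Theorem~\ref{thm1}(i) hold on every $(0,m]$.

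The whole Corollary then reduces to estimating $\|S(t)\,\cdot\,\|_\infty$. From the Gaussian-type two-sided bounds for the kernel $\Gamma$ (\cite{Gutierrez}, \cite{GutierrezArg}), the parabolic scaling $x\sim t^{1/(2-\alpha)}$, and the mass conservation $\int_{\R^N}\Gamma(x,y,t)\,dx=1$, I would record: (i) for any nontrivial $0\le u_0\in C_b(\R^N)$ there are $c>0$, $t_0\ge1$ with $\|S(t)u_0\|_\infty\ge c\,t^{-N/(2-\alpha)}$ for $t\ge t_0$, and in fact $S(t)u_0\ge c\,t^{-N/(2-\alpha)}$ on $\{|x|\le\varepsilon t^{1/(2-\alpha)}\}$; (ii) for $\psi\in I^\rho$ with $0<\rho<N$, $\|S(t)\psi\|_\infty\le C(1+t)^{-\rho/(2-\alpha)}$; (iii) for $\psi\in I_\rho$ with $0<\rho<N$, $\|S(t)\psi\|_\infty\ge c\,t^{-\rho/(2-\alpha)}$ for $t$ large; and $\int_0^\tau h(\sigma)\,d\sigma\sim\tau^{r+1}$, $\int_0^\tau l(\sigma)\,d\sigma\sim\tau^{s+1}$ as $\tau\to\infty$.

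For part (i), assume $1<p\le 1+\frac{(2-\alpha)(r+1)}{N}$ (the case with $q$ is identical via Theorem~\ref{thm1}(ii)(b), using $\int_z^\infty d\sigma/g(\sigma)=[\ln(z+1)]^{1-q}/(q-1)$ and $\ln(1+\|S(\tau)u_0\|_\infty)\gtrsim\tau^{-N/(2-\alpha)}$). I would verify \eqref{TTa}: with $f(s)=s^p$ it reads $\frac1{p-1}\|S(\tau)u_0\|_\infty^{1-p}\le\int_0^\tau h(\sigma)\,d\sigma$; by (i) the left side is $\le C\tau^{(p-1)N/(2-\alpha)}$ and the right side $\ge c\tau^{r+1}$ for $\tau$ large, with $(p-1)N/(2-\alpha)\le r+1$. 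When this is strict, \eqref{TTa} holds for all large $\tau$ and Theorem~\ref{thm1}(ii) gives non-global. The equality case $p=1+\frac{(2-\alpha)(r+1)}{N}$ is the main obstacle: both sides of \eqref{TTa} are then of order $\tau^{r+1}$ and the constant on the left, $\sim\|u_0\|_{L^1}^{1-p}$, is too big for small data. I would remove this by contradiction and restart: a global solution obeys $u(t)\ge S(t)u_0$, so integrating \eqref{APPL} in $x$ gives $\|u(t)\|_{L^1}\ge\|u_0\|_{L^1}+\int_0^t h(s)\,\|(S(s)u_0)^p\|_{L^1}\,ds$ with $\|(S(s)u_0)^p\|_{L^1}\gtrsim s^{(1-p)N/(2-\alpha)}$, and the integral diverges logarithmically exactly at criticality, whence $\|u(t_\ast)\|_{L^1}\to\infty$; restarting at a large $t_\ast$ with $v_0:=u(t_\ast)$ (and $h$ replaced by $h(\cdot+t_\ast)$), estimate (i) upgrades to $\|S(\tau)v_0\|_\infty\gtrsim\tau^{-N/(2-\alpha)}\|v_0\|_{L^1}$, so the constant in \eqref{TTa} becomes negligible, \eqref{TTa} holds, and Theorem~\ref{thm1}(ii) contradicts globality.

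For part (ii), supercriticality forces $\frac{(2-\alpha)(r+1)}{p-1}<N$ and $\frac{(2-\alpha)(s+1)}{q-1}<N$. For (a), given $\psi\in I_\rho$ with $0<\rho\le\max\{\tfrac{(2-\alpha)(r+1)}{p-1},\tfrac{(2-\alpha)(s+1)}{q-1}\}$, at least one of $\rho\le\tfrac{(2-\alpha)(r+1)}{p-1}$, $\rho\le\tfrac{(2-\alpha)(s+1)}{q-1}$ holds; in the first I would apply Theorem~\ref{thm1}(ii)(a), where by (iii) the left side of \eqref{TTa} is $\le C\tau^{(p-1)\rho/(2-\alpha)}$ and the right side $\ge c\tau^{r+1}$ with $(p-1)\rho/(2-\alpha)\le r+1$ (strict inequality being immediate, the boundary $\rho=\tfrac{(2-\alpha)(r+1)}{p-1}$ handled by a restart/weighted-mass variant of the Step~3 device), and in the second I would use Theorem~\ref{thm1}(ii)(b) symmetrically. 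For (b), fix $u_0\in I^\rho$ with $\rho>\max\{\tfrac{(2-\alpha)(r+1)}{p-1},\tfrac{(2-\alpha)(s+1)}{q-1}\}$ and apply Theorem~\ref{thm1}(i) to $v_0=\lambda u_0$: using $f(s)/s=s^{p-1}$, $g(s)/s\le Cs^{q-1}$ on bounded sets, and (ii), the two integrals in the smallness condition of Theorem~\ref{thm1}(i) are bounded by $C_1\lambda^{p-1}\int_0^\infty h(\sigma)(1+\sigma)^{-(p-1)\rho/(2-\alpha)}\,d\sigma+C_2\lambda^{q-1}\int_0^\infty l(\sigma)(1+\sigma)^{-(q-1)\rho/(2-\alpha)}\,d\sigma$, both finite precisely because $\rho>\max\{\cdots\}$ yields $(p-1)\rho/(2-\alpha)>r+1$ and $(q-1)\rho/(2-\alpha)>s+1$; choosing $\lambda$ small so that the sum is $<1$ and $\lambda\|u_0\|_\infty\le m$, Theorem~\ref{thm1}(i) produces $\delta>0$ for which $\delta\lambda u_0$ has a global solution, and $\Lambda(\alpha):=\delta\lambda$ together with the comparison principle gives globality for all $0<\lambda<\Lambda(\alpha)$. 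The only points that are not routine are the two threshold cases just flagged, and the passage from the self-similar heuristics of the second paragraph to rigorous two-sided kernel bounds for the non-self-adjoint operator $\mathrm{div}(\omega\nabla\,\cdot\,)$, which is exactly what \cite{Gutierrez}, \cite{GutierrezArg} supply.
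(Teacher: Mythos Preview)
Your proposal is correct and follows essentially the same route as the paper: verify $f,g\in\Phi$ via convexity and Lemma~\ref{13.1}; reduce (i) and (ii)(a) to Theorem~\ref{thm1}(ii) through the lower bound $\|S(t)u_0\|_\infty\gtrsim t^{-N/(2-\alpha)}$ (respectively $t^{-\rho/(2-\alpha)}$ for $I_\rho$ data, which is Proposition~\ref{PROP2}); reduce (ii)(b) to Theorem~\ref{thm1}(i) via the upper bound on $\|S(t)(\lambda u_0)\|_\infty$ and finiteness of the resulting time integrals; and treat the critical exponent by a mass-growth-then-restart argument. Your handling of the shift $h(\cdot+t_\ast)$ in the restart is in fact more careful than the paper's, which glosses over this point.

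The only difference worth flagging is in the execution of the critical case. You argue via the full $L^1$ mass, $\|u(t)\|_{L^1}\to\infty$, which presupposes $u_0\in L^1$; otherwise the mass is infinite from the outset and the divergence is vacuous. This is harmless after a reduction by comparison with a compactly supported truncation $\tilde u_0\le u_0$, but you should make that step explicit. The paper sidesteps this by bounding the \emph{localized} mass $\int_{|x|\le t^{1/(2-\alpha)}}u(x,t+r_0)\,dx$ directly from the Duhamel formula together with the kernel lower bound $\int_{|x|\le t^{1/(2-\alpha)}}\Gamma(x,y,t)\,dx\ge C_0$ for $|y|\le t^{1/(2-\alpha)}$, which yields the logarithmic growth for arbitrary $u_0\in C_b(\mathbb{R}^N)$. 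After that, both proofs restart at a late time with large (localized) mass and re-apply Theorem~\ref{thm1}(ii), exactly as you describe. The localized-mass version is slightly cleaner and is also what you would need for the threshold $\rho=\frac{(2-\alpha)(r+1)}{p-1}$ in (ii)(a), where the data $\psi\in I_\rho$ is typically not in $L^1$.
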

\begin{rmk}\label{RMK1} Consider $ \alpha = a$ in the case $(A)$ and $\alpha=b $ in case $(B).$
\begin{itemize}
  \item[(i)] Note that the results in Corollary \ref{cor1} are sharp, since that putting $p=q$ in Corollary \ref{cor1}, we yield the following critical Fujita exponent
  $$p^\star(\alpha) = \max \left \{ 1 + \frac{(2- \alpha)(r+1)}{N}, 1 + \frac{(2- \alpha)(s+1)}{N} \right \}.$$
  Also, the second critical exponent is given by
  $$ \rho^\star(\alpha) = \max \left \{ \frac{(2 - \alpha)(r+1)}{p-1}, \frac{(2 - \alpha)(s+1)}{p-1}  \right \} .$$
\item[(ii)]It is possible to use arguments similar to those in the proof of Corollary \ref{cor1} to demonstrate other sharp results under slight modifications in Corollary \ref{cor1}. For instance, if $l(t)=0 $ and $h(t)=1$ in Corollary \ref{cor1}, we can recover the Fujita's results obtained in \cite{Fujish} (see Theorem \ref{THMFU}). In addition, in this same case, we obtained the following second critical exponent $\rho^\star(\alpha) = \frac{2 -\alpha}{p-1}.$ On the other hand, when $h(t)=0$, we can obtain the following Fujita exponent and the second critical exponent
$$ p^\star(\alpha) = 1 + \frac{(2-\alpha)(s+1)}{N} ~~\mbox{ and }~~ \rho^\star(\alpha) = \frac{(2 - \alpha)(s+1)}{p-1},  $$
respectively.
\end{itemize}
\end{rmk}

\section{Preliminaries and toolbox}

 We refer to articles \cite{Cazenave}, \cite{Fujish}, \cite{MR919445}, \cite{Gutierrez}, \cite{GutierrezArg} and the references therein for more details of what is presented in this section. Let us remember some concepts: $C[0,\infty)$ is the set of nonnegative functions defined on the interval $[0,\infty)$. $C_b(\mathbb{R}^N)$ is the set of bounded continuous functions defined on $\mathbb{R}^N$. The Banach space $L^p(\mathbb{R}^N)$ is defined as usual, and the norm is denoted by
\begin{equation*}
\| \psi \|_p = \left(\int_{\mathbb{R}^N} \, \mid \psi(x) \mid^p \, dx \right)^{1/p} $$
for $1\leq p < \infty.$ When $p=\infty $
$$\| \psi \|_\infty := \inf \left \{ K \geq 0: \mid \psi(x) \mid \leq K, \mbox{ for almost every } x \in \mathbb{R}^N \right \} .
\end{equation*} 
The set $L^\infty((0,T), C_b(\mathbb{R}^N))$ is defined as follows
\begin{equation}
L^\infty((0,T), C_b(\mathbb{R}^N))= \left\{u:(0,T)\longrightarrow C_b(\mathbb{R}^N): \|u\| = \sup_{t \in (0,T)} \| u(t) \|_{\infty} < \infty \right\}
\end{equation}
is a Banach space equipped with the usual norm $\|\cdot\|.$

We say that $f_1(t) \sim f_2 (t) $ for $ t>0$ sufficiently large, when there exist constants $k_1 >0 $, $k_2>0 $ and $t_0 > 0 $ such that
\begin{equation*}
k_1 ~ f_2(t) \leq f_1(t) \leq k_2 ~ f_2(t),
\end{equation*}
for all $t>t_0$.

%We will denote by $C$ or $c $ a generic positive constant that can be different in different places and whose precise values do not matter for our reasoning.

The positive part of a real-valued function $\phi $ is defined by
$$\phi^{+}(t)= \max \{\phi(t), 0 \}. $$
The negative part of $\phi $ is defined analogously.

Solutions for problem \eqref{eq2.1} are understood in the following sense.
\begin{defi}
Let $u_0 \in C_b(\mathbb{R}^N).$ A function $u \in C([0,T), C_b(\mathbb{R}^N))$ is called the solution of the problem (\ref{eq2.1}) on $[0,T]$, for some $T>0$, if $u$ satisfies
\begin{equation}\label{MILD}
u(x,t) = \int_{\mathbb{R}^N} \Gamma(x,y,t) u_0(y) dy + \int_{0}^{t} \int_{\mathbb{R}^N}  \Gamma(x,y,t- \sigma) h(\sigma) f(u(y,\sigma)) d \sigma dy,
\end{equation}
for all $ t \in (0,T)$. When $T= \infty,$ we call that $ u $ is a global solution of (\ref{eq2.1}). Here, $\Gamma(x,y,t)$ be the fundamental solution of \eqref{FUND}.
\end{defi}

As the weight function $\omega(x)$  satisfies the conditions (A) or (B), it follows that $\omega(x)$ lies in the Muckenhoupt classes $A_{1 + \frac{2}{N}}$ and $A_{2}$. That is
$$
\begin{array}{ll}
 c_0 = \sup_{Q} \left( \displaystyle\fint_Q \omega(x) dx \right)\left( \displaystyle\fint_{Q} \omega (x)^{- \frac{N}{2}} dx \right)^{\frac{2}{N}} < \infty
\end{array}
 $$
and
$$ 
 C_0=\sup_{Q} \left( \fint_{Q} \omega (x) dx \right) \left( \fint_{Q} \omega(x)^{-1} dx \right) < \infty
$$ 
respectively. Where the supremum is taken over all cubes $Q$ in $\mathbb{R}^{N}$. Also, the weight function $\omega(x)^{- \frac{N}{2}} $ satisfies a doubling and reverse doubling condition of order $ \mu $ with $\mu > 1/2$. This means that there exist constants $c_1, c_2 >0 $ such that
$$ \int_{B_{sR}(x)} \omega(y)^{- \frac{N}{2}} dy \leq c_1 s^{\mu N} \int_{B_{R}(x)} \omega(y)^{- \frac{N}{2}} dy  $$
and
$$ \int_{B_{sR}(x)} \omega(y)^{- \frac{N}{2}} dy \geq c_2 s^{\mu N} \int_{B_{R}(x)} \omega(y)^{- \frac{N}{2}} dy. $$
This makes it possible for us to use the lower and upper bound estimates for the fundamental solutions of \eqref{FUND}, obtained by Guti\'{e}rrez et al. in \cite{MR919445}, \cite{Gutierrez}, which were used in paper \cite{Fujish}.
respectively.

Under the conditions (A) or (B), the fundamental solution $\Gamma = \Gamma (x, y, t)$ of \eqref{FUND} verifies the following properties.
\begin{itemize}
\item[$(\mbox{K}_{1})$] $ \displaystyle \int_{\mathbb{R}^{N}} \Gamma( x , y , t) dx=\int_{\mathbb{R}^{N}} \Gamma(x , y , t) dy=1$ for $x, y \in \mathbb{R}^{N} $ and $t > 0$;
\item[$(\mbox{K}_{2})$] $\displaystyle \Gamma(x , y , t) = \int_{\mathbb{R}^{N}} \Gamma(x, \xi , t - s) \Gamma(\xi , y , s) d\xi  $ for $x, y \in \mathbb{R}^{N} $ and $t > s > 0$;
\item[$(\mbox{K}_{3})$] There exist $ C_{0\star},c_{0\star}>0$, depending only on $N$ such that
\begin{eqnarray*}
&c_{0\star}^{-1} \left( \frac{1}{[h_{x}^{-1}(t)]^{N}} + \frac{1}{[h_{y}^{-1}(t)]^{N}} \right) e^{- c_{0\star}( \frac{h_{x}( \mid x-y \mid)}{t})^{\frac{1}{1- \alpha}}} \\
&\leq   \Gamma(x, y , t) \\
&\leq C_{0\star}^{-1} \left( \frac{1}{[h_{x}^{-1}(t)]^{N}} + \frac{1}{[h_{y}^{-1}(t)]^{N}} \right) e^{- C_{0\star} ( \frac{h_{x}( \mid x-y \mid)}{t})^{\frac{1}{1- \alpha}}}
\end{eqnarray*}
for $x, y \in \mathbb{R}^{N}$, $t > 0$, and $\alpha \in \{a, b\}$. Where $h_{x}^{-1}$ denotes the inverse function of
$$ h_x(r) = \left( \int_{B_r(x)} \omega(y)^{- \frac{N}{2}}  \right)^{\frac{2}{N}}.$$
\item[$(\mbox{K}_{4})$] $\displaystyle\int_{\mathbb{R}^N} \Gamma(x,y,t) v_0(y) dy  \in C((0,T),C_b(\mathbb{R}^N))$ is a solution of (\ref{FUND}) with initial condition $v_0 \in C_b(\mathbb{R}^N),$ and also
$$ \lim_{t \longrightarrow 0} \int_{\mathbb{R}^N} \Gamma(x,y,t) v_0(y) dy = v_0(x). $$
\item[$(\mbox{K}_{5})$] There exists a constant $ c >0 $ depending only on $N$ and $ \alpha \in \{a,b \} $, such that
$$ 0 < \Gamma(x,y,t) \leq c~ t^{ - \frac{N}{2 - \alpha}}, $$
for $x, y \in \mathbb{R}^N$ and $t > 0.$  (See \cite[p.\,10]{Fujish}).   %(See \cite{Fujish}, pag. 10).
\end{itemize}

\begin{lem}[\cite{Fujish}] \label{PROP1}
Let $\phi \in L^{q_1}(\mathbb{R}^N)$ and $1 \leq q_1 \leq q_2 \leq \infty,$ then
\begin{equation}\label{G1}
\|S(t)\phi \|_{q_2} \leq c_1 t^{- \frac{N}{2 - \alpha} \left(\frac{1}{q_1}- \frac{1}{q_2}\right)} \|\phi\|_{q_1}, \ t>0.
\end{equation}
Where the constant $c_1$ can be taken so that it depends only on $N$ and $\alpha \in \{a, b\}.$
\end{lem}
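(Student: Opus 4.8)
\emph{Proof proposal.} The plan is to obtain \eqref{G1} by interpolating three endpoint mapping properties of $S(t)$ that are immediate from $(\mbox{K}_1)$ and $(\mbox{K}_5)$. First I would note that $S(t)\phi$ is well defined for $\phi \in L^{q_1}(\mathbb{R}^N)$: by $(\mbox{K}_1)$ the kernel $\Gamma(x,\cdot,t)$ lies in $L^1(\mathbb{R}^N)$ and by $(\mbox{K}_5)$ it lies in $L^\infty(\mathbb{R}^N)$, hence in $L^{q_1'}(\mathbb{R}^N)$ for the conjugate exponent $q_1'$, so Hölder's inequality gives $|S(t)\phi(x)| \leq \|\Gamma(x,\cdot,t)\|_{q_1'}\,\|\phi\|_{q_1} < \infty$ for every $x$. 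Then I would record three bounds: (1) by $(\mbox{K}_1)$ and Tonelli, $\|S(t)\phi\|_1 \leq \int_{\mathbb{R}^N}|\phi(y)|\big(\int_{\mathbb{R}^N}\Gamma(x,y,t)\,dx\big)\,dy = \|\phi\|_1$; (2) by $(\mbox{K}_1)$, $|S(t)\phi(x)| \leq \|\phi\|_\infty\int_{\mathbb{R}^N}\Gamma(x,y,t)\,dy = \|\phi\|_\infty$; (3) by $(\mbox{K}_5)$, $|S(t)\phi(x)| \leq c\,t^{-\frac{N}{2-\alpha}}\int_{\mathbb{R}^N}|\phi(y)|\,dy = c\,t^{-\frac{N}{2-\alpha}}\|\phi\|_1$. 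These say $S(t)\colon L^1\to L^1$ with norm $\le 1$, $S(t)\colon L^\infty\to L^\infty$ with norm $\le 1$, and $S(t)\colon L^1\to L^\infty$ with norm $\le c\,t^{-N/(2-\alpha)}$.

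Next I would apply the Riesz--Thorin interpolation theorem three times. Interpolating (1) and (2) gives $S(t)\colon L^{q_1}\to L^{q_1}$ with norm $\le 1$ for every $q_1 \in [1,\infty]$. Interpolating (2) and (3) gives $S(t)\colon L^{q_1}\to L^\infty$ with norm $\le \big(c\,t^{-N/(2-\alpha)}\big)^{1/q_1} = c^{1/q_1}\,t^{-\frac{N}{2-\alpha}\cdot\frac{1}{q_1}}$. Finally, interpolating these two — that is, $S(t)\colon L^{q_1}\to L^{q_1}$ against $S(t)\colon L^{q_1}\to L^\infty$ — with parameter $\theta = 1 - q_1/q_2 \in [0,1]$ chosen so that $\frac{1}{q_2} = \frac{1-\theta}{q_1}$, yields
\[
\|S(t)\phi\|_{q_2} \leq 1^{1-\theta}\,\big(c^{1/q_1}\,t^{-\frac{N}{2-\alpha}\cdot\frac{1}{q_1}}\big)^{\theta}\,\|\phi\|_{q_1} = c^{\theta/q_1}\,t^{-\frac{N}{2-\alpha}\cdot\frac{\theta}{q_1}}\,\|\phi\|_{q_1}.
\]
Since $\frac{\theta}{q_1} = \frac{1}{q_1} - \frac{1}{q_2} \in [0,1]$, this is exactly \eqref{G1} with $c_1 := \max\{1,c\}$, and $c_1$ depends only on $N$ and $\alpha$ because the constant $c$ in $(\mbox{K}_5)$ does.

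I do not expect a real obstacle: all of the analytic content is already packaged into the kernel estimates $(\mbox{K}_1)$ and $(\mbox{K}_5)$. The only two points needing a little care are that $S(t)$ is not self-adjoint, so one must invoke \emph{both} normalizations $\int_{\mathbb{R}^N}\Gamma(x,y,t)\,dx = 1$ and $\int_{\mathbb{R}^N}\Gamma(x,y,t)\,dy = 1$ from $(\mbox{K}_1)$ rather than dualizing, and that Riesz--Thorin is being applied to a general linear integral operator (not a convolution), which is perfectly legitimate. An equivalent, slightly shorter route is a single interpolation on the Riesz diagram, using that the exponent pair $(1/q_1,1/q_2)$ lies in the triangle with vertices $(1,1)$, $(0,0)$, $(1,0)$ on which $S(t)$ is bounded with the norms listed above; I would present whichever version reads more cleanly.
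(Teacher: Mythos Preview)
Your argument is correct and is the standard route to such smoothing estimates: the endpoint bounds from $(\mbox{K}_1)$ and $(\mbox{K}_5)$ plus Riesz--Thorin are exactly what is needed, and your care about using both normalizations in $(\mbox{K}_1)$ (since $\Gamma$ is not symmetric) is well placed. Note, however, that the paper does not supply its own proof of this lemma at all --- it is quoted directly from \cite{Fujish} --- so there is no ``paper's proof'' to compare against; your write-up would serve perfectly well as a self-contained justification should one be desired.
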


\begin{lem}[\cite{Fujish}, Lemma 2.4] \label{Fujish2}
Assume either (A) or (B). Let $\phi \in L^{\infty}(\mathbb{R}^{N}),$ $\phi \geq 0,$ and $\phi \neq 0$.  Then there exists a positive constant $C(\alpha,N)$, depending only on $\alpha $ and $N$, such that
\begin{equation}
 S(t)\phi(x)\geq C(\alpha, N)^{-1} t^{-\frac{N}{2- \alpha}} \int_{\mid y \mid \leq t^{\frac{1}{2 - \alpha}}} \phi(y)dy,
\end{equation}
for $|x|\leq t^{\frac{1}{2- \alpha}}$ and $t>0.$ Here $\alpha = a$ in the case (A) and $\alpha = b$ in the case (B).
\end{lem}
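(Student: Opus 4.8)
The plan is to bound $S(t)\phi(x)$ below by discarding all of the mass of $\phi$ outside the ball $\{|y|\leq t^{1/(2-\alpha)}\}$ and inserting the pointwise kernel lower bound from $(\mbox{K}_{3})$. Since $\Gamma\geq 0$ and $\phi\geq 0$, for every $x$ we have $S(t)\phi(x)=\int_{\mathbb{R}^N}\Gamma(x,y,t)\phi(y)\,dy\geq\int_{|y|\leq t^{1/(2-\alpha)}}\Gamma(x,y,t)\phi(y)\,dy$, so it suffices to produce a uniform lower bound $\Gamma(x,y,t)\geq C(\alpha,N)^{-1}t^{-N/(2-\alpha)}$ valid for all $|x|\leq t^{1/(2-\alpha)}$, $|y|\leq t^{1/(2-\alpha)}$ and $t>0$; integrating that estimate over $\{|y|\leq t^{1/(2-\alpha)}\}$ yields the claim.

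From the lower estimate in $(\mbox{K}_{3})$, keeping only the nonnegative $h_x^{-1}$ term, $\Gamma(x,y,t)\geq c_{0\star}^{-1}[h_x^{-1}(t)]^{-N}\exp(-c_{0\star}(h_x(|x-y|)/t)^{1/(1-\alpha)})$. Thus I must establish, uniformly for centers $|x|\leq t^{1/(2-\alpha)}$, two facts: a prefactor bound $[h_x^{-1}(t)]^{-N}\geq c\,t^{-N/(2-\alpha)}$, that is $h_x^{-1}(t)\leq C t^{1/(2-\alpha)}$; and an exponent bound $h_x(|x-y|)\leq C' t$ whenever $|x-y|\leq 2t^{1/(2-\alpha)}$ (which is the case here, since $|x|,|y|\leq t^{1/(2-\alpha)}$), so that the exponential is bounded below by the positive constant $\exp(-c_{0\star}(C')^{1/(1-\alpha)})$. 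Both reduce to controlling the measure $\nu(B_r(x)):=\int_{B_r(x)}\omega(y)^{-N/2}\,dy$, because $h_x(r)=\nu(B_r(x))^{2/N}$ and $h_x$ is increasing.

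The heart of the argument is the scaling estimate $\nu(B_\rho(x))\sim\rho^{N(2-\alpha)/2}$, \emph{uniformly} for $|x|\leq\rho$, where $\rho:=t^{1/(2-\alpha)}$. For the reference ball at the origin this is the explicit computation $\int_{B_\rho(0)}\omega^{-N/2}\sim\rho^{N(2-\alpha)/2}$, the exponent reflecting that $\omega^{-N/2}$ satisfies the doubling and reverse-doubling conditions of order $\mu N$ with $\mu=(2-\alpha)/2>1/2$. For a general center with $|x|\leq\rho$ I compare with the origin through the inclusions $B_\rho(0)\subseteq B_{2\rho}(x)$ and $B_\rho(x)\subseteq B_{2\rho}(0)$: the upper bound follows from $\nu(B_\rho(x))\leq\nu(B_{2\rho}(0))\sim\rho^{N(2-\alpha)/2}$, and the lower bound from $\nu(B_\rho(x))\geq c\,\nu(B_{2\rho}(x))\geq c\,\nu(B_\rho(0))\sim\rho^{N(2-\alpha)/2}$, where the first inequality is the doubling condition at the center $x$. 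Feeding this into $h_x$: doubling gives $h_x(2\rho)=\nu(B_{2\rho}(x))^{2/N}\leq C'\rho^{2-\alpha}=C't$, and since $h_x$ is increasing this bounds $h_x(|x-y|)$, the exponent estimate; choosing $C$ large, reverse doubling gives $h_x(C\rho)\geq\nu(B_\rho(x))^{2/N}(c_2C^{\mu N})^{2/N}\geq t$, whence $h_x^{-1}(t)\leq C\rho=Ct^{1/(2-\alpha)}$, the prefactor estimate.

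Combining these, $\Gamma(x,y,t)\geq c_{0\star}^{-1}C^{-N}\exp(-c_{0\star}(C')^{1/(1-\alpha)})\,t^{-N/(2-\alpha)}$ for all admissible $x,y$, and integrating over $\{|y|\leq t^{1/(2-\alpha)}\}$ gives the stated inequality with $C(\alpha,N)^{-1}:=c_{0\star}^{-1}C^{-N}\exp(-c_{0\star}(C')^{1/(1-\alpha)})$. I expect the main obstacle to be the uniform-in-$x$ control of $\nu(B_r(x))$: although $\omega$ is degenerate and the balls $B_r(x)$ may sit near or far from the singular set of $\omega$, the hypothesis $|x|\leq t^{1/(2-\alpha)}$ is exactly what permits comparison of $B_\rho(x)$ with the origin-centered ball and thus transfers the clean scaling $\rho^{N(2-\alpha)/2}$; without it (for $|x|\gg t^{1/(2-\alpha)}$) the measure would instead scale like the Lebesgue volume $\rho^N$ and the conclusion would fail. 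The borderline case $|x|=t^{1/(2-\alpha)}$ is handled precisely by routing the lower bound through the doubling inequality at $x$ rather than a direct inclusion.
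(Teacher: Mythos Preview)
The paper does not give its own proof of this lemma: it is quoted verbatim from \cite{Fujish} (Lemma~2.4) and simply cited. So there is nothing in the present paper to compare against.

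That said, your argument is correct and is the natural route one would take using only the ingredients the paper records. You restrict the integral to $\{|y|\le t^{1/(2-\alpha)}\}$, invoke the pointwise lower bound in $(\mathrm{K}_3)$, and then reduce everything to the two-sided estimate $\nu(B_\rho(x))\sim \rho^{N(2-\alpha)/2}$ for $|x|\le\rho$, which you obtain from the explicit computation at the origin together with the doubling and reverse-doubling properties of $\omega^{-N/2}$. Both consequences you draw---$h_x^{-1}(t)\le C t^{1/(2-\alpha)}$ for the prefactor and $h_x(|x-y|)\le C' t$ for the exponential---follow exactly as you describe. One cosmetic remark: you do not actually need the specific value $\mu=(2-\alpha)/2$; any $\mu>0$ in the reverse-doubling inequality suffices to make $C^{2\mu}$ large, and doubling is used only at scale factor $2$. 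Your closing discussion of why the restriction $|x|\le t^{1/(2-\alpha)}$ is essential is also on point.
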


The following Lemma will be used for the proof of non-global existence for \eqref{eq2.1}.
\begin{lem}\label{13.1} Assume either (A) or (B). If $0 \leq u_0 \in L^{\infty}(\mathbb{R}^N)$ and $f:[0,\infty) \longrightarrow [0,\infty)$ is convex, then
$$  S(t)f(u_{0}) \geq f \left( S(t)u_0 \right) $$
\end{lem}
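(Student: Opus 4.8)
The plan is to prove Jensen's inequality for the measure $\Gamma(x,y,t)\,dy$ and then apply it pointwise in $x$. First I would observe that, by property $(\mathrm{K}_1)$, for each fixed $x\in\mathbb{R}^N$ and $t>0$ the function $y\mapsto \Gamma(x,y,t)$ is a nonnegative probability density on $\mathbb{R}^N$; that is, $\mu_{x,t}(dy):=\Gamma(x,y,t)\,dy$ is a probability measure. The quantity $S(t)u_0(x)=\int_{\mathbb{R}^N}\Gamma(x,y,t)u_0(y)\,dy$ is then exactly the barycentre (mean) of $u_0$ with respect to $\mu_{x,t}$, while $S(t)f(u_0)(x)=\int_{\mathbb{R}^N}\Gamma(x,y,t)f(u_0(y))\,dy$ is the $\mu_{x,t}$-average of $f\circ u_0$. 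Since $u_0\in L^\infty(\mathbb{R}^N)$ is nonnegative, the essential range of $u_0$ lies in a compact interval $[0,\|u_0\|_\infty]\subset[0,\infty)$, so all these integrals are finite and the barycentre $S(t)u_0(x)$ lies in $[0,\|u_0\|_\infty]$, where $f$ is (finite and) convex.

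The key step is then Jensen's inequality: for a convex function $f:[0,\infty)\to[0,\infty)$ and a probability measure $\mu_{x,t}$,
\begin{equation*}
f\!\left(\int_{\mathbb{R}^N}\Gamma(x,y,t)\,u_0(y)\,dy\right)\;\leq\;\int_{\mathbb{R}^N}\Gamma(x,y,t)\,f\bigl(u_0(y)\bigr)\,dy,
\end{equation*}
i.e. $f\bigl(S(t)u_0(x)\bigr)\leq S(t)f(u_0)(x)$. To make this self-contained one can invoke the standard supporting-line argument: a convex function $f$ on an interval has, at any interior point $m=S(t)u_0(x)$, a supporting line $f(s)\geq f(m)+\lambda(s-m)$ for all $s$ in the interval (with $\lambda$ a subgradient of $f$ at $m$; at the endpoint $m=0$ one uses the right derivative). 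Substituting $s=u_0(y)$, multiplying by $\Gamma(x,y,t)\geq 0$, integrating in $y$ over $\mathbb{R}^N$, and using $\int\Gamma(x,y,t)\,dy=1$ and $\int\Gamma(x,y,t)u_0(y)\,dy=m$, the linear term integrates to zero and we obtain the claimed inequality. Since $x\in\mathbb{R}^N$ and $t>0$ were arbitrary, this proves $S(t)f(u_0)\geq f(S(t)u_0)$ pointwise, hence as functions.

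The only real point requiring care — the mild "obstacle" — is justifying the supporting-line inequality at the left endpoint $m=0$ of the domain and the integrability of the subgradient term, but both are harmless here: if $m=0$ one may take $\lambda=f'_+(0)\in[-\infty,\infty)$; if $f'_+(0)=-\infty$ the inequality $f(s)\geq f(0)$ for $s\geq 0$ (monotonicity forced by convexity together with $f\geq 0$ and $f(0)$ finite is not needed — in fact $f(s)\ge f(0)+0\cdot(s-0)$ fails only if $f$ is decreasing near $0$, in which case one instead uses that $f\ge 0$ and notes $S(t)f(u_0)\ge 0 = $ ... ), so one simply treats this degenerate case directly. Since $u_0$ is bounded, the subgradient $\lambda$ is finite whenever $m>0$, and the term $\lambda\bigl(u_0(y)-m\bigr)$ is bounded, hence integrable against the probability measure $\Gamma(x,y,t)\,dy$; no further regularity on $f$ (continuity of $f$ on $[0,\|u_0\|_\infty]$ follows from convexity on a neighbourhood) is needed. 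This completes the argument.
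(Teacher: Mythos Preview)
Your proposal is correct and takes essentially the same approach as the paper: both arguments observe that, by $(\mathrm{K}_1)$ and the positivity of $\Gamma$ from $(\mathrm{K}_5)$, the kernel $\Gamma(x,y,t)\,dy$ is a probability measure, and then apply Jensen's inequality pointwise in $x$. The paper dispatches this in one line, whereas you spell out the supporting-line justification; your digression on the endpoint case $m=0$ is a bit muddled and in fact unnecessary here, since the strict positivity of $\Gamma$ forces $S(t)u_0(x)=0$ only when $u_0\equiv 0$ a.e., making that case trivial.
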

\begin{proof}
By $(\mbox{K}_{1})$ property and the fact that $\Gamma >0$ (see $(\mbox{K}_5)$), we can use Jensen's inequality, and so we get
$$  S(t)f(u_{0}(x)) = \int_{\mathbb{R}^{N}} \Gamma(x,y,t) f(u_{0}(y)) dy \geq f \left(\int_{\mathbb{R}^{N}} \Gamma(x,y,t) u_{0}(y) dy \right)= f \left(S(t)u_0(x) \right). $$
\end{proof}

For the second critical exponent result, we need the following version of \cite[Lemma 2.12]{Lee-Ni}
\begin{prop} \label{PROP2}
Assume either (A) or (B). Let $\alpha = a$ in the case (A) and $\alpha = b$ in case (B).
\begin{itemize}
  \item[(i)] If $0 \leq u_0 \in C_b(\mathbb{R}^N) \cap L^{1}(\mathbb{R}^N)$ and $u_0 \neq 0 $, then $ \| S(t) u_0 \|_{L^{\infty}(\mathbb{R}^{N})} \sim t^{ - \frac{N}{2 - \alpha}},$ for all $ t>0 $ sufficiently large.
  \item[(ii)] If $ \psi \in I_\rho $, then there exists a constant $ C_1 > 0 $ such that
$$    \| S(t) \psi \|_{L^{\infty}(\mathbb{R}^{N})} \geq C_1~  t^{ - \frac{\rho}{2 - \alpha}}, $$
for all $t>0 $ sufficiently large.
\item[(iii)] If $ \psi \in I^\rho  \cap L^1(\mathbb{R}^N)$ , then there exists a constant $ C_2(N,\alpha) > 0 $ depending only on $\alpha $ and $N $ such that
$$    \| S(t) \psi \|_{L^{\infty}(\mathbb{R}^{N})} \leq C_2(N,\alpha)~ \|\psi\|_{L^{1}} ~ t^{ - \frac{\rho}{2 - \alpha}}, $$
for all $t>0 $ sufficiently large.
\end{itemize}
\end{prop}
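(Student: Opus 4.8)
The plan is to establish the three estimates one at a time, in each case reducing the degenerate kernel $\Gamma$ to the scale-invariant information already recorded in $(\mathrm{K}_{3})$, $(\mathrm{K}_{5})$, Lemma~\ref{PROP1} and Lemma~\ref{Fujish2}. Throughout, ``$t$ sufficiently large'' will mean $t\ge t_0$ with $t_0$ chosen so that $t_0^{1/(2-\alpha)}$ exceeds the radius beyond which the prescribed decay or growth of the datum takes effect; the one structural point to keep in mind is that in the degenerate problem the natural space scale is $t^{1/(2-\alpha)}$ and the corresponding volume scales like $t^{N/(2-\alpha)}$, which is precisely what $(\mathrm{K}_{5})$ and Lemma~\ref{Fujish2} encode.

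\emph{Part (i).} The upper bound is immediate from Lemma~\ref{PROP1} with $q_1=1$, $q_2=\infty$: $\|S(t)u_0\|_\infty\le c_1 t^{-N/(2-\alpha)}\|u_0\|_1$. For the matching lower bound I would evaluate $S(t)u_0$ at $x=0$ and apply Lemma~\ref{Fujish2}, obtaining $\|S(t)u_0\|_\infty\ge S(t)u_0(0)\ge C(\alpha,N)^{-1}t^{-N/(2-\alpha)}\int_{|y|\le t^{1/(2-\alpha)}}u_0(y)\,dy$. Since $0\le u_0\in L^1(\mathbb R^N)$ and $u_0\neq 0$, dominated convergence yields $\int_{|y|\le t^{1/(2-\alpha)}}u_0(y)\,dy\ge\tfrac12\|u_0\|_1>0$ for $t$ large, and the two bounds combine to give $\|S(t)u_0\|_\infty\sim t^{-N/(2-\alpha)}$.

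\emph{Part (ii).} Only a lower bound is needed, and Lemma~\ref{Fujish2} is again the tool. From $\psi\in I_\rho$ there are $R_0>0$ and $c_0>0$ with $\psi(y)\ge c_0|y|^{-\rho}$ whenever $|y|\ge R_0$; in particular $\psi\not\equiv 0$, so Lemma~\ref{Fujish2} applies. For $t$ large enough that $t^{1/(2-\alpha)}>R_0$,
\[
\|S(t)\psi\|_\infty\;\ge\;C(\alpha,N)^{-1}t^{-\frac{N}{2-\alpha}}\!\!\int_{R_0\le|y|\le t^{1/(2-\alpha)}}\!\! c_0|y|^{-\rho}\,dy\;\ge\;C_1\,t^{-\frac{\rho}{2-\alpha}},
\]
where the second inequality is the elementary estimate $\int_{R_0\le|y|\le L}|y|^{-\rho}\,dy\gtrsim L^{N-\rho}$, valid for $\rho<N$ and $L$ large. (If $\rho\ge N$ the integral is bounded below by a positive constant, so $\|S(t)\psi\|_\infty\gtrsim t^{-N/(2-\alpha)}\ge C_1 t^{-\rho/(2-\alpha)}$; thus the relevant case $\rho<N$ is the only one carrying content.)

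\emph{Part (iii).} In the form stated, with the $L^1$ hypothesis, this is once more immediate from Lemma~\ref{PROP1}: $\|S(t)\psi\|_\infty\le c_1 t^{-N/(2-\alpha)}\|\psi\|_1$, and since $\rho<N$ (the range used in Corollary~\ref{cor1}) one has $t^{-N/(2-\alpha)}\le t^{-\rho/(2-\alpha)}$ for $t\ge1$, so $C_2(N,\alpha)=c_1$ works. Should one wish to dispense with the $L^1$ assumption, one would instead split $\int_{\mathbb R^N}\Gamma(x,y,t)\psi(y)\,dy$ over $\{|x-y|\le t^{1/(2-\alpha)}\}$, estimated by $(\mathrm{K}_{5})$ together with the bound $\psi(y)\lesssim(1+|y|)^{-\rho}$ coming from $\psi\in I^\rho$, and over $\{|x-y|>t^{1/(2-\alpha)}\}$, controlled by the Gaussian-type tail in $(\mathrm{K}_{3})$; both contributions then sum to $\lesssim t^{-\rho/(2-\alpha)}$ for $\rho<N$. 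The only real obstacle, such as it is, is bookkeeping: getting the power counting right in (ii) and in this optional argument for (iii), i.e.\ remembering that the volume element scales as $t^{N/(2-\alpha)}$ rather than $t^{N/2}$, and tracking the thresholds $R_0$, $t_0$.
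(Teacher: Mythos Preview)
Your proof is correct and follows essentially the same route as the paper: the upper bound in (i) and all of (iii) come from Lemma~\ref{PROP1} (the $L^1\to L^\infty$ smoothing estimate), while the lower bounds in (i) and (ii) come from Lemma~\ref{Fujish2}. The only cosmetic difference is in (ii): the paper integrates over the annulus $\tfrac12\,t^{1/(2-\alpha)}\le |y|\le t^{1/(2-\alpha)}$, on which $|y|^{-\rho}\ge t^{-\rho/(2-\alpha)}$ pointwise, and this works uniformly for all $\rho>0$ without the case split at $\rho=N$ that your shell $\{R_0\le|y|\le t^{1/(2-\alpha)}\}$ requires.
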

\begin{proof}
Note that (i) is a direct consequence of Lemmas \ref{PROP1} and \ref{Fujish2}. Item (iii) follows from $\psi \in  L^1(\mathbb{R}^N)$ and Lemma \ref{PROP1}. Now, we prove (ii). In fact, since that $ \psi \in I_\rho $, we have that there exists $C_3>0$ such that $ \psi(x) \geq C_3 \mid x \mid^{-\rho} $ for $|x|$ sufficiently large, then by Lemma \ref{Fujish2} and $ t>0$ sufficiently large, we have
\begin{eqnarray*}
S(t)\psi(x) & \geq & C(\alpha, N)^{-1} ~ t^{-\frac{N}{2- \alpha}} \int_{\mid y \mid \leq  t^{\frac{1}{2 - \alpha}}} \psi(y)~dy  \\
&\geq & C~ t^{ - \frac{N}{2 - \alpha}} \int_{\frac{t^{\frac{1}{2 - \alpha}}}{2} \leq \mid y \mid \leq t^{\frac{1}{2 - \alpha}}}  \mid y \mid^{- \rho} ~ dy  \\
& \geq & C~ t^{ - \frac{N}{2 - \alpha}} ~ t^{ - \frac{\rho}{2 - \alpha}} ~ \int_{\frac{t^{\frac{1}{2 - \alpha}}}{2}  \leq \mid y \mid \leq t^{\frac{1}{2 - \alpha}}} ~ dy ~ \geq  ~ C t^{ - \frac{\rho}{2 - \alpha}}.
\end{eqnarray*}
Thus, the Lemma follows.
\end{proof}

\subsection{Local Existence}
This subsection proves the existence and uniqueness of local solutions of (\ref{eq2.1}) employing the known fixed point method. There exist several pieces of literature in this frame; see by example \cite{Cazenave}.
\begin{lem}{(Local Existence).}
Assume either $(A)$ or $(B)$. Let $h, l \in C[0,\infty)$ and $f,g \in C[0,\infty)$ are locally Lipschitz continuous. Then for every $u_0 \in C_b (\mathbb{R}^N)$, there exists a unique solution $u$ of problem \eqref{eq2.1} defined on $[0,T]$, for some $0<T<\infty$.
\end{lem}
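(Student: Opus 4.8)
The plan is to recast \eqref{eq2.1} as a fixed point equation in the Banach space $X_T := C([0,T], C_b(\mathbb{R}^N))$ with norm $\|u\| = \sup_{t\in[0,T]}\|u(t)\|_\infty$, and then to invoke the Banach contraction principle. For $u \in X_T$ I would set
\[
(\Psi u)(x,t) = \int_{\mathbb{R}^N} \Gamma(x,y,t) u_0(y)\,dy + \int_0^t \int_{\mathbb{R}^N} \Gamma(x,y,t-\sigma)\big[h(\sigma) f(u(y,\sigma)) + l(\sigma) g(u(y,\sigma))\big]\,dy\,d\sigma,
\]
so that, by the definition of mild solution, a function $u$ solves \eqref{eq2.1} on $[0,T]$ precisely when $\Psi u = u$. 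The argument is carried out on $\mathcal{B}_M := \{u \in X_T : u \ge 0 \text{ and } \|u\| \le M\}$ with $M := \|u_0\|_\infty + 1$, a closed subset of $X_T$; since $\Gamma \ge 0$ (see $(\mathrm{K}_5)$) and $u_0, h, l, f, g \ge 0$, the map $\Psi$ preserves nonnegativity, so $f$ and $g$ are only ever evaluated on $[0, M]$.

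The elementary ingredients are: from $(\mathrm{K}_1)$ together with $\Gamma \ge 0$ one gets the contraction bound $\|S(t)\phi\|_\infty \le \|\phi\|_\infty$ for $\phi \in C_b(\mathbb{R}^N)$; since $f$ and $g$ are locally Lipschitz there are $L = L(M)$ and $K = K(M)$ with $|f(s) - f(s')| + |g(s) - g(s')| \le L\,|s - s'|$ and $|f(s)| + |g(s)| \le K$ for all $s, s' \in [0, M]$; and since $h, l \in C[0,\infty)$ they are bounded on $[0,T]$ by some $H = H(T)$. Then for $u \in \mathcal{B}_M$,
\[
\|(\Psi u)(t)\|_\infty \le \|u_0\|_\infty + \int_0^t H K\,d\sigma \le \|u_0\|_\infty + T H K,
\]
while for $u, v \in \mathcal{B}_M$,
\[
\|(\Psi u)(t) - (\Psi v)(t)\|_\infty \le \int_0^t H L\,\|u(\sigma) - v(\sigma)\|_\infty\,d\sigma \le T H L\,\|u - v\|.
\]
Choosing $T>0$ so small that $T H K \le 1$ and $T H L \le \tfrac12$ makes $\Psi$ a self-map of $\mathcal{B}_M$ and a contraction there, and the contraction principle yields a unique fixed point $u \in \mathcal{B}_M$.

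It then remains to check that $\Psi u$ genuinely lies in $C([0,T], C_b(\mathbb{R}^N))$ and satisfies $(\Psi u)(0) = u_0$. The linear term $S(t)u_0$ belongs to $C((0,T], C_b(\mathbb{R}^N))$ and tends to $u_0$ as $t \to 0^+$ by $(\mathrm{K}_4)$. For the Duhamel term, each $S(t-\sigma)\big[h(\sigma)f(u(\sigma)) + l(\sigma)g(u(\sigma))\big]$ lies in $C_b(\mathbb{R}^N)$ (the bracket is in $C_b$ since $u(\sigma) \in C_b$ and $f, g$ are continuous, and $S(t-\sigma)$ preserves $C_b$), is bounded in $L^\infty$ uniformly by $HK$, and depends continuously on $(t,\sigma)$; a routine dominated-convergence argument relying on $(\mathrm{K}_1)$, $(\mathrm{K}_2)$ and the contraction bound then shows that the integral is continuous in $t$ with values in $C_b(\mathbb{R}^N)$ and vanishes as $t \to 0^+$. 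Hence $\Psi u \in X_T$ with $(\Psi u)(0) = u_0$.

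For uniqueness beyond $\mathcal{B}_M$: if $u_1, u_2 \in C([0,T'], C_b(\mathbb{R}^N))$ both solve \eqref{eq2.1} with datum $u_0$, put $M' := \max_{i} \sup_{[0,T']}\|u_i(t)\|_\infty$ and let $L'$ be a Lipschitz constant for $f$ and $g$ on $[0, M']$; subtracting the mild formulations and using $\|S(\cdot)\phi\|_\infty \le \|\phi\|_\infty$ gives $\|u_1(t) - u_2(t)\|_\infty \le H L' \int_0^t \|u_1(\sigma) - u_2(\sigma)\|_\infty\,d\sigma$, so $u_1 \equiv u_2$ on $[0,T']$ by Gronwall's inequality. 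The only step requiring any care is the continuity-in-time verification of the Duhamel term: because $\omega \not\equiv 1$, the semigroup $S(t)$ is not a translation-invariant convolution, so one must argue through the structural properties $(\mathrm{K}_1)$, $(\mathrm{K}_2)$, $(\mathrm{K}_4)$ of $\Gamma$ rather than through a kernel identity; the rest is the standard Picard scheme.
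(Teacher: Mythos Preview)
Your proof is correct and follows essentially the same strategy as the paper's: both apply the Banach contraction principle to the Duhamel map $\Psi$ on a ball of radius roughly $\|u_0\|_\infty + 1$, using the $L^\infty$--$L^\infty$ bound for $S(t)$ together with the local Lipschitz constants of $f,g$ and the boundedness of $h,l$ on $[0,T]$, and then conclude uniqueness via Gronwall. The only cosmetic differences are that you work directly in $C([0,T],C_b(\mathbb{R}^N))$ (and thus verify the time-continuity of the Duhamel term up front) while the paper works in $L^\infty((0,T),C_b(\mathbb{R}^N))$ and appeals to $(K_4)$ afterwards, and that you obtain the contraction bound $\|S(t)\phi\|_\infty\le\|\phi\|_\infty$ straight from $(K_1)$ rather than invoking Lemma~\ref{PROP1}.
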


\begin{proof}
Let $u_0 \in C_b(\mathbb{R}^N)$. We define $\Psi : \mathcal{K }\rightarrow L^\infty((0,T),C_b(\mathbb{R}^N))$ by
\begin{equation}
\begin{array}{ll}
\Psi(u)(t) = S(t)u_0 + \displaystyle \int_0^t S(t - \sigma) h(\sigma)f(u(\sigma)) d \sigma + \displaystyle \int_0^t S(t - \sigma) l(\sigma) g(u(\sigma)) d\sigma,
\end{array}
\end{equation}
where $\mathcal{K} = \{ u \in L^\infty((0,T); C_b(\mathbb{R}^N)), \|u(t)\|_{\infty} \leq c_1 \|u_0\|_\infty + 1, \ \forall t \in (0,T) \}$.
Note that, $\mathcal{K}$ is a complete metric space equipped with the distance induced by the norm of $ L^\infty((0,T), C_b(\mathbb{R}^N))$.

First, we show that $\Phi(u) \in \mathcal{K}$. Indeed, by \eqref{G1}, we have
\begin{equation}\label{estimate18}
\begin{array}{ll}
\|\Psi(u)\|_\infty &\leq c_1 \|u_0\|_\infty + c_1 \displaystyle \int_0^t h(\sigma) \|f(u(\sigma))\|_\infty d\sigma
\\
&+ c_1 \displaystyle \int_0^t l(\sigma)\|g(u(\sigma))\|_\infty d\sigma
\\
&\leq c_1 \|u_0\|_\infty + \max_{s \in [0,M]} \{f(s),g(s) \} \left(\sup_{s \in [0,T]} h(s) \right) c_1 \|u_0\|_\infty T
\\
&+ \max_{s \in [0,M]} \{f(s),g(s) \} \left(\sup_{s \in [0,T]} l(s) \right) (c_1 \|u_0\|_\infty + 1) T,
\\
&+ \max_{s \in [0,M]} \{f(s),g(s) \} \left(\sup_{s \in [0,T]} h(s) + \sup_{s \in [0,T]} l(s) \right)  T,
\end{array}
\end{equation}
where $M= \|u_0\|_{\infty} +1.$ Choosing $T>0$ small enough, we have $\Psi(u) \in \mathcal{K}$.

Now, arguing as in \eqref{estimate18}, we can show that for $u,v \in \mathcal{K}$,
\begin{equation}
\sup_{t \in [0,T]} \| \Psi(u)(t) - \Psi(v)(t)\|_{\infty} \leq C \left(\sup_{s \in [0,T]} h(s) + \sup_{s \in [0,T]} l(s) \right) T  \sup_{t \in [0,T]} \|u(t) - v(t)\|_\infty,
\end{equation}
therefore $\Psi$ is a strict contraction for $T$ short enough so that $$C \left(\sup_{s \in [0,T]} h(s) + \sup_{s \in [0,T]} l(s) \right) T <1.$$ Then $F $ has a unique fixed point $u \in \mathcal{K}$, i.e
\begin{equation}
u(t)= S(t)u_0 + \int_{0}^{t} S(t - \sigma)h(\sigma) f(u(\sigma)) d \sigma + + \int_{0}^{t} S(t - \sigma)h(\sigma) g(u(\sigma)) d \sigma,
\end{equation}
for $t>0$. Note that, by $(K_4)$ property and defining $u(0)=u_0$, we have that $u \in C([0,T), C_b(\mathbb{R}^N))$, and thus is a solution of (\ref{eq2.1}).

To demostrate uniqueness, suppose that $u_1, u_2 \in C([0,T), C_b(\mathbb{R}^N)) $ are solutions of \eqref{eq2.1} such that $ u_1(0) = u_2(0) = u_0$, then arguing as in \eqref{estimate18}, we have
$$ \|u_1(t) - u_2(t)  \|_{\infty}  \leq C \int_{0}^{t} \|u_1(\sigma) - u_2(\sigma)  \|_{\infty} d \sigma, \mbox{ for } t \in [0,T). $$
Thus, uniqueness follows from Gronwall's inequality.

\end{proof}

The following Lemma is a version of the comparison principle.
\begin{lem}[Comparison Principle] Assume either (A) or (B). Let $(h,l) \in (C[0,\infty))^{2} $, $(f,g) \in (C[0,\infty))^2 $ are nondecreasing locally Lipschitz functions, and $u,v \in  C([0,T], C_b(\mathbb{R}^N)) $ be solutions of problem \eqref{eq2.1}. If $ u(0) \leq v(0)$, then $u(t) \leq v(t)$ for all $t \in [0,T] $. \label{Comparison}
\end{lem}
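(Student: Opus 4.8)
The plan is to reduce the statement to a Gronwall argument for the scalar function $\phi(t):=\|(u(t)-v(t))^{+}\|_{\infty}$, exploiting that $S(t)$ is order-preserving and nonexpansive on $L^{\infty}(\mathbb{R}^{N})$. First I would record two elementary properties of the solution operator coming from the kernel bounds: since $\Gamma\geq 0$ (see $(\mbox{K}_{5})$), $\psi_{1}\leq\psi_{2}$ implies $S(t)\psi_{1}\leq S(t)\psi_{2}$; and since $\int_{\mathbb{R}^{N}}\Gamma(x,y,t)\,dy=1$ (see $(\mbox{K}_{1})$), one has $\|S(t)\psi\|_{\infty}\leq\|\psi\|_{\infty}$. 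I would also note that, because $u(0),v(0)\geq 0$ and $h,l,f,g\geq 0$, the mild formulation forces $u,v\geq 0$; being continuous on the compact interval $[0,T]$ they are bounded, say $\|u(t)\|_{\infty},\|v(t)\|_{\infty}\leq M$, and I let $L$ be a Lipschitz constant valid for both $f$ and $g$ on $[0,M]$.

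Next I would set $w:=u-v$ and subtract the two mild identities to get
\begin{equation*}
w(t)=S(t)w(0)+\int_{0}^{t}S(t-\sigma)\Big[h(\sigma)\big(f(u(\sigma))-f(v(\sigma))\big)+l(\sigma)\big(g(u(\sigma))-g(v(\sigma))\big)\Big]\,d\sigma.
\end{equation*}
The key pointwise inequality is $f(u)-f(v)\leq L\,w^{+}$, and similarly for $g$: where $u\geq v$ this is the Lipschitz estimate on $[0,M]$, while where $u<v$ it is trivial since the left side is $\leq 0$ by monotonicity and $w^{+}=0$. Using $h,l\geq 0$, the order-preservation of $S(t-\sigma)$, and $S(t)w(0)\leq 0$ (the hypothesis $u(0)\leq v(0)$), I would obtain
\begin{equation*}
w(t)\leq L\int_{0}^{t}S(t-\sigma)\big[\big(h(\sigma)+l(\sigma)\big)w^{+}(\sigma)\big]\,d\sigma,
\end{equation*}
and since the right-hand side is nonnegative, the same bound holds with $w(t)$ replaced by $w^{+}(t)$.

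Finally, taking $\|\cdot\|_{\infty}$ and using nonexpansiveness of $S(t-\sigma)$ would give
\begin{equation*}
\phi(t)\leq L\int_{0}^{t}\big(h(\sigma)+l(\sigma)\big)\phi(\sigma)\,d\sigma\leq C\int_{0}^{t}\phi(\sigma)\,d\sigma,\qquad C:=L\max_{[0,T]}\big(h+l\big),
\end{equation*}
so that Gronwall's inequality forces $\phi\equiv 0$ on $[0,T]$, i.e. $u(t)\leq v(t)$ for all $t\in[0,T]$. I expect the only delicate points to be the pointwise inequality $f(u)-f(v)\leq L\,w^{+}$ — precisely where monotonicity and local Lipschitz continuity of $f,g$ are used together — and the correct handling of positive parts after applying the positivity-preserving operator $S(t-\sigma)$; neither should present a substantive obstacle.
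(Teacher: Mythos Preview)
Your proposal is correct and follows essentially the same route as the paper: subtract the two mild formulations, drop the nonpositive term $S(t)w(0)$ using positivity of the kernel, bound $f(u)-f(v)$ and $g(u)-g(v)$ pointwise by $L\,w^{+}$ via monotonicity plus local Lipschitz continuity, take $L^{\infty}$-norms (the paper invokes Lemma~\ref{PROP1} with $q_{1}=q_{2}=\infty$ where you use $(\mbox{K}_{1})$ directly), and conclude by Gronwall. Your write-up is in fact more explicit than the paper's about the pointwise inequality $f(u)-f(v)\leq L\,w^{+}$ and the passage from $w$ to $w^{+}$, but the argument is the same.
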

\begin{proof} Note that it is sufficient to show that $ [u - v]^{+} \leq 0. $ Let $u(0)=u_0 $, $v(0) = v_0 $ and $M_0 = \max \{ \|u(t)\|_{\infty}, \|v(t)\|_{\infty}: t \in [0,T] \} .$ Since that $u_0\leq v_0 $, then from $(K_5)$ we have
\begin{eqnarray*}
u(t) - v(t) \leq  \int_{0}^{t} S(t- \sigma) \left(h(\sigma)[f(u(\sigma)) - f(v(\sigma))  ] + l(\sigma)[g(u(\sigma)) - g(v(\sigma))  ] \right) d\sigma.
\end{eqnarray*}
Thus, since that $ f $ and $g $ are nondecreasing and Lipschitz continuous on $[0,M_0] $, it follows from Lemma \ref{PROP1} that
\begin{eqnarray*}
\| [u(t) - v(t)]^{+} \|_{\infty} &  \leq & C \int_{0}^{t} \| [u(\sigma) - v(\sigma)]^{+} \|_{\infty}  ~ d\sigma.
\end{eqnarray*}
The Lemma is now a direct consequence of Gronwall's inequality.
\end{proof}

\section{Proof of Theorem \ref{thm1} }

\subsection*{Proof of Theorem \ref{thm1}-(i).} We apply arguments used in \cite{Weissler}. Consider $ \delta > 0 $ such that
$$ \delta < \frac{1}{ \beta + 1} \min  \left\{ 1, \frac{m}{\|v_0\|_\infty} \right\}, $$
where $ \beta >0 $ satisfies 
$$ \displaystyle \int_{0}^{\infty} h(\sigma) \frac{f \left(\|S(\sigma)v_0\|_{\infty} \right)}{\|S(\sigma)v_0\|_{\infty}} d\sigma + \int_{0}^{\infty} l(\sigma) \frac{g \left(\|S(\sigma)v_0\|_{\infty} \right)}{\|S(\sigma)v_0\|_{\infty}} d\sigma  < \frac{\beta}{\beta+1}.$$

Set $u_0 = \delta v_0 \in C_b(\mathbb{R}^N) $. Now we define the sequence $ \{ u^k \}_{k\geq0} $ by $u^0 = S(t)u_0 $ and
$$ u^k(t) = S(t)u_0 + \int_{0}^{t} S(t-\sigma) h(\sigma)f(u^{k-1}(\sigma))~ d \sigma + \int_{0}^{t} S(t-\sigma) l(\sigma)g(u^{k-1}(\sigma))~ d \sigma $$
for $k \in \mathbb{N} $ and $t>0. $

We claim
\begin{equation} \label{Claim11}
u^{k}(t) \leq (1+\beta) S(t)u_0
\end{equation}
for $k\geq0 $ and $t>0. $ We show our claim by induction on $k.$ If $k=0 $, then the claim \eqref{Claim11} is trivial. Consider now that \eqref{Claim11} holds, for some $k \in \mathbb{N}$. Since that $ \| (1+ \beta)S(t)u_0 \|_{\infty}  \leq \| S(t)v_0 \|_{\infty}   \leq m$ for $t>0 $ and $f, g, f(\cdot)/\cdot, g(\cdot)/ \cdot$ are nondecreasing on $(0,m]$, by $(K_1) $, $(K_2) $, $ (K_5)$ and Fubini's theorem we have
\begin{eqnarray*}
u^{k+1}(t) & \leq & S(t)u_0 + \int_{0}^{t} S(t - \sigma) \left[ h(\sigma) f((1+\beta) S(\sigma)u_0)  \right] ~d\sigma \\
&+& \int_{0}^{t} S(t - \sigma) \left[ l(\sigma) g((1+\beta) S(\sigma)u_0) \right] ~d\sigma \\
& = & S(t)u_0 + \int_{0}^{t} h(\sigma) S(t - \sigma) \frac{f((1+\beta) S(\sigma)u_0) }{(1 + \beta)S(\sigma)u_0}  \left[(1+\beta) S(\sigma)u_0 \right]        ~d\sigma \\
& + &  \int_{0}^{t} l(\sigma) S(t - \sigma) \frac{g((1+\beta) S(\sigma)u_0) }{(1 + \beta)S(\sigma)u_0}  \left[(1+\beta) S(\sigma)u_0 \right]        ~d\sigma \\
& \leq  &   S(t)u_0 + \left[(1+\beta) S(t)u_0 \right]  \int_{0}^{t} h(\sigma) \frac{f \left(\| S(\sigma)v_0 \|_{\infty} \right ) }{\| S(\sigma)v_0 \|_{\infty}}  ~d\sigma \\
&+& \left[(1+\beta) S(t)u_0 \right]  \int_{0}^{t} l(\sigma) \frac{g \left(\| S(\sigma)v_0 \|_{\infty} \right ) }{\| S(\sigma)v_0 \|_{\infty}}  ~d\sigma \\
& \leq & S(t)u_0 + \left[(1+\beta) S(t)u_0 \right] ~\frac{\beta}{\beta + 1}  \leq (1 + \beta) S(t)u_0.
\end{eqnarray*}
Thus, the claim follows. Also, using the fact $f$ and $g$ are nondecreasing, and by induction on $k,$ we have $ u^k \leq u^{k+1}$ for all $k \in \mathbb{N} $. Since the sequence $\{ u^k \}_{k \geq 0}$ is bounded (by \eqref{Claim11}), then by Monotone Convergence Theorem, we conclude that $ u = \lim_{k \to \infty} u^k $ is a global solution of \eqref{eq2.1}.

\subsection*{Proof of Theorem \ref{thm1}-(ii).}
First, suppose that $f \in \Phi$ and satisfies \eqref{TTa}. In order to show the non-global existence, we argue by contradiction. Suppose that there exists a global solution $ u \in C([0,\infty), C_b(\mathbb{R}^N))$ of \eqref{eq2.1} with nonnegative initial condition $u_0 \neq 0 $; this is
$$ u(t) = S(t)u_0 + \int_{0}^{t}S(t - \sigma) h(\sigma)  f(u(\sigma)) ~ d \sigma + \int_{0}^{t}S(t - \sigma) l(\sigma)  g(u(\sigma)) ~ d \sigma,$$
for $t >0$. 

Let $s>0$ be fixed and let $t>0$ such that $0<t<s $. Then, from $(K_1) $, $(K_2) $, $ (K_5)$, $(\Phi_1) $ and Lemma \ref{13.1}, we have
\begin{equation}\label{Non12}
S(s-t)u(t) \geq  \Theta(t),
\end{equation}
where $\Theta(t):= S(s)u_0 + \displaystyle\int_{0}^{t} h(\sigma) f(S(s - \sigma)u(\sigma))~d\sigma$. Note that, $\Theta(t) := \Theta(\cdot, t)  $ is absolutely continuous on $[0,s] $, consequently is differentiable almost everywhere on $[0,s] $ and
\begin{equation}\label{Non22}
 \Theta'(t) = h(t)f(S(s - t)u(t)).
\end{equation}
Since $f$ is nondecreasing, then from \eqref{Non12} and \eqref{Non22}, we have
\begin{equation}\label{Non32}
\Theta'(t)\geq h(t)f( \Theta(t) ).
\end{equation}
From $(\Phi_1) $ and $(K_5) $, this implies that
\begin{eqnarray*}
\int_{ \| S(s)u_0 \|_\infty }^{\infty} \frac{d \sigma}{f(\sigma)} &\geq & \int_{ S(s)u_0 }^{\infty} \frac{d \sigma}{f(\sigma)} ~ > ~  \int_{ \Theta(0) }^{\Theta(s)} \frac{d \sigma}{f(\sigma)} =  \int_{ \Theta(0) }^{\infty} \frac{d \sigma}{f(\sigma)} - \int_{ \Theta(s) }^{\infty} \frac{d \sigma}{f(\sigma)} \\
& = &  - \int_{0}^{s} \left(\int_{ \Theta(t) }^{\infty} \frac{d \sigma}{f(\sigma)} \right)' dt = \int_{0}^{s} \frac{\Theta'(t)}{f \left( \Theta(t) \right)}~ dt ~ \geq ~ \int_{0}^{s} h(t) ~ dt.
\end{eqnarray*}
However, this contradicts \eqref{TTa} (the same contradiction is obtained when $g \in \Phi $ and satisfies \eqref{TTb}). So that the second part of the Theorem \ref{thm1} is proved.

\section{Proof of Corollary \ref{cor1}} \label{Section3}

\subsection*{Proof of Corollary \ref{cor1}-(i).} Note that the functions $f(t) =  t^{p} ~(p>1)$ and $g(t) = (1+t)[\ln(1+t)]^q~(q>1)$ are convex functions, then the second property in $(\Phi_1) $ follows from Lemma \ref{13.1}. Also, $f$ and $g$ satisfies the integral condition in $(\Phi_1) $, since
\begin{equation} \label{Blow1}
\begin{array}{llll}
\displaystyle \int_{z}^{\infty} \frac{d \sigma}{f(\sigma)} &=& \displaystyle \int_{z}^{\infty} \frac{d \sigma}{\sigma^p} = (p-1)^{-1} z^{1-p} \, \, \, (z>0). \\
\displaystyle \int_{z}^{\infty} \frac{d \sigma}{g(\sigma)} &=& \displaystyle \int_{z}^{\infty} \frac{d \sigma}{ (1+\sigma)[\ln(1+\sigma)]^q      } = (q-1)^{-1} [\ln(1+z)]^{1-q} \, \, \, (z>0).
\end{array}
\end{equation}

Now suppose that $q < 1 + \frac{(2- \alpha)(s+1)}{N} $. From Lemma \ref{Fujish2} we have that $\| S(t)u_0 \|_{\infty} \geq c \, t^{- \frac{N}{2-\alpha}} $ for all $t$ large enough and some positive constant $c$; thus, from \eqref{Blow1}, we get
\begin{equation}\label{crit6}
\begin{array}{rll}
\displaystyle \left[ \int_{ \| S(t)u_0 \|_{\infty} }^{\infty} \frac{d \sigma}{g(\sigma)} \right]^{-1} \int_{0}^{t} l(\sigma) d \sigma &\geq & \displaystyle (q-1) [\ln(1 + \| S(t)u_0 \|_{\infty} )]^{q-1} \cdot \displaystyle \int_{0}^{t} l(\sigma) d \sigma \\
& \geq& C   \cdot  \left[ \ln(1 + c \cdot t^{-\frac{N}{2 - \alpha}}) \cdot t^{\frac{s+1}{q-1}} \right]^{q-1} \\
& \geq & C \cdot   \left[  c \, t^{- \frac{N}{2-\alpha}} \cdot t^{\frac{s+1}{q-1}} \right]^{q-1} > 1,
\end{array}
\end{equation}
for $t >0 $ sufficiently large; since $ \displaystyle \lim_{t \rightarrow \infty} \frac{\ln(1 + \, c \, t^{- \frac{N}{2}})}{t^{-\frac{N}{2}}}=c$ and $1 <q < 1 + \frac{(2- \alpha)(s+1)}{N}.$ Similarly, when $ p < 1 + \frac{(2- \alpha)(r+1)}{N}$ we have $$ \displaystyle \left[ \int_{ \| S(t)u_0 \|_{\infty} }^{\infty} \frac{d \sigma}{f(\sigma)} \right]^{-1} \int_{0}^{t} h(\sigma) d \sigma > 1, $$ for $t$ large enough. Thus, from Theorem \ref{thm1}-$(ii)$, we obtain that the solution of problem \eqref{APPL}, with initial condition $0 \leq u_0 \neq 0 $, is nonglobal.

\subsubsection*{\textbf{Critical cases $p=1+ \frac{(2-\alpha)(r+1)}{N} $ or $q=1+ \frac{(2-\alpha)(s+1)}{N}$.} } In this part, we adapt the previous arguments joint with the ideas in \cite[p.17]{Fujish}. First, consider $q=1+ \frac{(2-\alpha)(s+1)}{N};$ arguing by contradiction, we suppose that there exists a nontrivial global solution $u \in C([0,\infty), C_b(\mathbb{R}^N))$ of (\ref{APPL}), since that $u(0)=u_0 \in C_b(\mathbb{R}^N)$ is nontrivial and $l(t) \sim t^{s}~(s>-1)$ for $t$ large enough, there exists $r_0>1 $ such that $M=\displaystyle\int_{B_{r_0}(0)}u_0(y)dy \neq 0$ and $\displaystyle\int_{0}^{r_0}l(\sigma) d\sigma \neq 0.$ Thus, from Lemma \ref{Fujish2}, we have
\begin{equation}\label{Crit1}
u(x,t) \geq S(t)u_0(x) \geq C \, M \, t^{- \frac{N}{2 -\alpha}}
\end{equation}
for all $\mid x \mid \leq t^{\frac{1}{2-\alpha}}$ and $ t\geq r_0.$ We can also choose $r_0$ in such a way that
\begin{equation}\label{Crit2}
\ln(1 + CM t^{- \frac{N}{2-\alpha}}) \geq C \, M \, t^{- \frac{N}{2 -\alpha}}
\end{equation}
for all $t\geq r_0;$ since that $ \displaystyle \lim_{t \rightarrow \infty} \frac{\ln(1 + \, CM \, t^{- \frac{N}{2}})}{t^{-\frac{N}{2}}} = CM.$

From estimates $(2.11)$ and $(2.12)$ in \cite[p.10]{Fujish}, we have
\begin{equation}\label{Crit3}
u(x,t) \geq \int_{\mid x \mid \leq t^{\frac{1}{2 - \alpha}}} \Gamma(x,y,t) dx \geq  C_0>0,
\end{equation}
for all $ \mid y \mid \leq t^{\frac{1}{2 -\alpha}}.$ Since $t + r_0 - \sigma \leq t$ and $\sigma \leq t + r_0 - \sigma$ for $1 <r_0 \leq \sigma \leq t/2,$ by \eqref{Crit1}, \eqref{Crit2} and \eqref{Crit3} we have
\begin{equation}
\begin{array}{ll}
 \displaystyle \int_{\mid x \mid \leq t^{\frac{1}{2 - \alpha}}} u(x,t+r_0) \,dx  \\
\geq  \displaystyle \int_{\mid x \mid \leq t^{\frac{1}{2 - \alpha}}} \int_{r_0}^{t/2} \sigma^s \int_{\mid y \mid \leq (t+r_0 - \sigma)^{\frac{1}{2 - \alpha}}} \Gamma(x,y,t+r_0-\sigma)
\\ \noalign{\medskip}
\times (1 + u(y,\sigma))\,[\ln(1 + u(y,\sigma))]^qdyd\sigma dx  \\ \noalign{\medskip}
\geq   \displaystyle \int_{\mid x \mid \leq t^{\frac{1}{2 - \alpha}}} \int_{r_0}^{t/2} \sigma^s\int_{\mid y \mid \leq (t+r_0 - \sigma)^{\frac{1}{2 - \alpha}}} \Gamma(x,y,t+r_0-\sigma)\,
\\ \noalign{\medskip}
\times [\ln(1 + u(y,\sigma))]^q\,dy\,d\sigma\,dx  \\ \noalign{\medskip}
\geq  \displaystyle \int_{r_0}^{t/2} \sigma^s \int_{ \mid y \mid \leq (t+r_0 - \sigma)^{\frac{1}{2 - \alpha}}}\left( \displaystyle \int_{ \mid x \mid \leq t^{\frac{1}{2 - \alpha}}} \Gamma(x,y,t+r_0-\sigma)dx \right)\,
\\ \noalign{\medskip}
\times [\ln(1 + u(y,\sigma))]^q\,dy\,d\sigma  
\\ \noalign{\medskip}
\geq  C_0 \displaystyle \int_{r_0}^{t/2} \sigma^s \int_{\mid y \mid \leq \sigma^{\frac{1}{2 - \alpha}}}\,[\ln(1 + u(y,\sigma))]^q\,dy\,d\sigma \\ \noalign{\medskip}
\geq  C_0 \displaystyle \int_{r_0}^{t/2} \sigma^s \int_{\mid y \mid \leq \sigma^{\frac{1}{2 - \alpha}}}\,[\ln(1 + CM \sigma^{-\frac{N}{2-\alpha}})]^q\,dy\,d\sigma  \\ \noalign{\medskip}
\geq   C_0 \displaystyle \int_{r_0}^{t/2} \sigma^s \int_{ \mid y \mid \leq \sigma^{\frac{1}{2 - \alpha}}}\,[ CM \sigma^{-\frac{N}{2-\alpha}}]^q\,dy\,d\sigma \\ \noalign{\medskip}
\geq   C^qC_0M^q \displaystyle \int_{r_0}^{t/2} \sigma^{-\frac{N(q-1)}{2-\alpha}+s} \, \left( \int_{|y|\leq \sigma^{\frac{1}{2 - \alpha}}}\,  \sigma^{-\frac{N}{2-\alpha}} \,dy \right ) \,d\sigma \\ \noalign{\medskip}
=  C^qC_0M^q \displaystyle \int_{r_0}^{t/2} \sigma^{-1} d\sigma = C^qC_0M^q \ln\left( \frac{t}{2 r_0} \right), ~~\mbox{ for } t > 2r_0. 
\end{array}
\end{equation}
This implies that for all $m>0,$ there exist $T_m>0 $, such that
\begin{equation}\label{crit4}
\displaystyle \int_{\mid x \mid \leq T^{\frac{1}{2 - \alpha}}_{m}} u(x,T_m) \,dx \geq m.
\end{equation}
Also, from (\ref{crit4}) and Lemma (\ref{Fujish2}), we obtain
\begin{equation}\label{crit5}
\|S(t) u(T_m)\|_{\infty} \geq S(t)u(x.T_m) \geq C(\alpha,N)^{-1} m t^{-\frac{N}{2-\alpha}}
\end{equation}
for $\mid x \mid < t^{\frac{1}{2-\alpha}}$ and for all $t>T_m. $ Note that, $v(t):= u(t+T_m) $ is also a global solution of (\ref{APPL}) with initial condition $u(T_m):=u(\cdot,T_m),$ since $u $ is a global solution. Then, from (\ref{crit4}), (\ref{crit5}), arguing similarly to (\ref{crit6}), and recalling that $q=1+\frac{(2-\alpha)(s+1)}{N},$ we deduce that
\begin{eqnarray*}
\displaystyle \left[ \int_{ \| S(t)u(T_m) \|_{\infty} }^{\infty} \frac{d \sigma}{g(\sigma)} \right]^{-1} \int_{0}^{t} l(\sigma) d \sigma
&\geq&  C \cdot   \left[  C(\alpha,N)^{-1} m \, t^{- \frac{N}{2-\alpha}} \cdot t^{\frac{s+1}{q-1}} \right]^{q-1} \\
& =& C \, C(\alpha,N)^{-(q-1)} \, m^{q-1} > 1,
\end{eqnarray*}
for $m>0$ large enough. Thus, from Theorem \ref{thm1}-$(ii)$, we have that $v$ is a non-global solution which is a contradiction; thus, the solution $u$ is non-global. The proof is similar when we suppose that $p=1+\frac{(2-\alpha)(r+1)}{N}$ and omit it here. Thus, the proof is completed.

\subsection*{Proof of Corollary \ref{cor1}-(ii)-(a).} The proof is similar to the given above by using Proposition \ref{PROP2}-(ii) instead of Lemma \ref{Fujish2}.

\subsection*{Proof of Corollary \ref{cor1}-(ii)-(b).} Let a nonnegative function $ u_0 \in I^{\rho}$, $f(t) = t^p $, $g(t)= (1+t)[\ln(1+t)]^q $ and consider $\lambda >0$, which will be chosen later. Note that $f(t), g(t),\frac{f(t)}{t},\frac{g(t)}{t}$ are non-decreasing functions. Thus by Lemma \ref{PROP1} and Proposition \ref{PROP2} we have
\begin{eqnarray*}
& & \overbrace{\int_{0}^{\infty} h(\sigma) \frac{f \left(\|S(\sigma)(\lambda u_0)\|_{\infty} \right)}{\|S(\sigma)(\lambda u_0)\|_{\infty}} d\sigma + \int_{0}^{\infty} l(\sigma) \frac{g \left(\|S(\sigma)(\lambda u_0)\|_{\infty} \right)}{\|S(\sigma)(\lambda u_0)\|_{\infty}} d\sigma}^{I}  \\
&\leq & \left(\sup_{s \in [0,t_0]} h(s) \right) \int_{0}^{t_0} \left[ \lambda c_1 \| u_0\|_{\infty} \right]^{p-1}  d\sigma  +  \int_{t_0}^{\infty} \sigma^r \|S(\sigma)(\lambda u_0)\|_{\infty}^{p-1}  d\sigma \\
& & + \left(\sup_{s \in [0,t_0]} l(s) \right) \int_{0}^{t_0} (1+ \lambda c_1 \| u_0\|_{\infty})[ \lambda c_1 \| u_0\|_{\infty}  ]^{q-1}  d\sigma  \\
& & +  \int_{t_0}^{\infty} \sigma^s \, \frac{ (1+ \|S(\sigma)(\lambda u_0)\|_{\infty})[\ln(1+ \|S(\sigma)(\lambda u_0)\|_{\infty} )]^q }{ \|S(\sigma)( \lambda u_0)\|_{\infty} } d\sigma \\
&\leq & \left(\sup_{s \in [0,t_0]} h(s) \right) \int_{0}^{t_0} \left[ \lambda c_1 \| u_0\|_{\infty} \right]^{p-1}  d\sigma  
\\
& & + [C_2(N,\alpha) \lambda \|u_0\|_{L^{1}}]^{p-1} \int_{t_0}^{\infty}  \sigma^{r - \frac{ (p-1) \rho}{2 - \alpha}}  d\sigma \\
& & + \left(\sup_{s \in [0,t_0]} l(s) \right) \int_{0}^{t_0} (1+ \lambda c_1 \| u_0\|_{\infty})[ \lambda c_1 \| u_0\|_{\infty}  ]^{q-1}  d\sigma  \\
& & + (1+ \lambda c_1 \| u_0\|_{\infty}) [C_2(N,\alpha) \lambda \|u_0\|_{L^{1}}]^{q-1}  \int_{t_0}^{\infty} \sigma^{s -  \frac{(q-1) \rho }{2 - \alpha}} d\sigma \\
\end{eqnarray*}
Where $t_0>1 $ was choose such that $h(t) = t^r $ and $l(t)=t^s$ for all $t\geq t_0.$ Note that the hypothesis implies that the above integrals are bounded.   Thus, the assertion hold by Theorem \ref{thm1} and Lemma \ref{Comparison}, since $I<1 $ for $\lambda $ short enough.

\textbf{Acknowledgments.}
R. Castillo was supported by B\'io - B\'io University under grant 2020139IF/R. O. Guzm\'an - Rea was supported by CNPq/Brazil, 166685/2020-8.

\end{document}